\newtheorem{theorem}{Theorem}[section]
\newtheorem*{theorem*}{Theorem}
\newtheorem{lemma}{Lemma}[section]
\newtheorem{corollary}[theorem]{Corollary}
\newtheorem{example}[theorem]{Example}
\newtheorem{conjecture}[theorem]{Conjecture}
\def\qed{\hfill $\square$}
\newcommand{\eps}{{\varepsilon}}
\numberwithin{equation}{section}
\begin{document}
	\title[positive orthogonal Ricci curvature]{Manifolds with positive orthogonal Ricci curvature}

\author{Lei Ni}\thanks{The research of LN is partially supported by NSF grant DMS-1401500.  }
\address{Lei Ni. Department of Mathematics, University of California, San Diego, La Jolla, CA 92093, USA}
\email{lni@math.ucsd.edu}

\author{Qingsong Wang}
\address{Qingsong Wang. Department of Mathematics,
The Ohio State University, Columbus, OH 43210,
USA}
\email{{wang.8973@osu.edu}}

\author{Fangyang Zheng} \thanks{The research of FZ is partially supported by a Simons Collaboration Grant 355557.}
\address{Fangyang Zheng. Department of Mathematics,
The Ohio State University, Columbus, OH 43210,
USA}
\email{{zheng.31@osu.edu}}


\begin{abstract} In this paper we  study the class of compact K\"ahler manifolds with $Ric^\perp>0$. First we illustrate  examples of K\"ahler manifolds with $Ric^\perp>0$ on K\"ahler C-spaces, and construct ones on certain projectivized vector bundles. These examples show the abundance of K\"ahler manifolds which admit metrics of $Ric^\perp>0$. Secondly  we  prove some (algebraic) geometric consequences of the condition $Ric^\perp>0$ to illustrate that the condition  is also quite restrictive. Finally this last point is made evident with a classification result in dimension three and a partial classification in dimension four.
\end{abstract}

\maketitle

\section{Introduction}

In a recent work \cite{NZ} by the first and third author, the geometric implications  of {\em orthogonal Ricci curvature} $Ric^{\perp}$ on a  K\"ahler manifold $M^n$, which is defined by
$$ Ric^{\perp}_{X\overline{X}} = Ric(X, \overline{X}) - R(X, \overline{X}, X, \overline{X})/|X|^2 $$
for any type $(1,0)$ tangent vector $X$, was studied. For a compact K\"ahler manifold  with $Ric^{\perp}>0$ everywhere, it was shown in \cite{NZ} that the manifold is always projective, has finite $\pi_1(M)$, and has vanishing Hodge numbers: $h^{p,0}=0$ for $p=1, \,2, \,n-1$, and $n$. Beside the results just mentioned and the comparison theorems relating $Ric^\perp$ to $\Delta^\perp$, some compact and noncompact examples of K\"ahler manifolds with $Ric^\perp>0$ were also illustrated there.

The goal of this paper is to continue the study of compact K\"ahler manifolds with positive orthogonal Ricci curvature. Let us denote by ${\mathcal M}_n^{\perp}$ the set of all compact complex manifolds of complex dimension $n$ which admit K\"ahler metrics with $Ric^{\perp }>0$. For each $n\geq 2$, the ultimate goal is to  understand the class ${\mathcal M}_n^{\perp}$. In this paper we shall first illustrate  examples of manifolds with $Ric^\perp>0$ by showing that the K\"ahler-Einstein metrics  on most K\"ahler C-spaces with $b_2=1$ satisfy $Ric^\perp>0$, and then construct such K\"ahler metrics  on some projectivized vector bundles.

 In the second part of the paper we will prove some (algebraic) geometric consequences of the condition $Ric^\perp > 0$ to illustrate that the condition is also quite restrictive. Finally in dimension three  a  classification result is obtained, while in dimension four we obtain a partial classification result.  It is our hope that this paper will help to establish  that ${\mathcal M}_n^{\perp}$ forms an interesting class of rationally connected manifolds, perhaps similar to the class of Fano manifolds, and worth further investigation from both the differential geometric and the algebraic geometric point of view.

On the existence side, first of all, it is clear that the $Ric^{\perp}>0$ condition requires that the dimension of the manifold to be at least $2$ since all Riemann surfaces have $Ric^\perp\equiv0$. (In higher dimensions, as we shall see in the Appendix, any K\"ahler manifold $M^n$ with $n\geq 2$ and $Ric^\perp =0$ everywhere is either flat, or has $n=2$ and is locally holomorphically isometric to the product of two complex curves, with constant curvature of opposite signs.)  For a product manifold with the product metric, the product will have $Ric^{\perp}>0$ if  both factors have $Ric^{\perp}>0$ and $Ric \geq 0$. So for $X^n\in {\mathcal M}_n^{\perp}$ and $Y^m\in {\mathcal M}_m^{\perp}$, if the metrics involved also have nonnegative Ricci, then the product manifold $X\times Y$ lies in ${\mathcal M}_{n+m}^{\perp}$.   Observe also, that any small deformation of a manifold in ${\mathcal M}_n^{\perp}$ is again in ${\mathcal M}_n^{\perp}$.

Built upon the works by Itoh \cite{Itoh} as well as  Chau and Tam in \cite{ChauTam}, we illustrate that most of the K\"ahler C-spaces with $b_2=1$ admit K\"ahler metrics with $Ric^\perp>0$.

\begin{theorem}
Let $M^n$ be a classical K\"ahler C-space with $n\geq 2$ and $b_2=1$. Then the (unique up to constant multiple) K\"ahler-Einstein metric has $Ric^{\perp}>0$.
\end{theorem}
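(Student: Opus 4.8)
The plan is to prove that the Kähler–Einstein metric on a classical Kähler C-space with $b_2=1$ has $Ric^\perp>0$.

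The plan is to reduce the statement to a curvature comparison at a single point and then verify a root-theoretic inequality type by type. Since $M^n=G/P$ is a Kähler C-space with $b_2=1$, the group $G$ is simple and the parabolic $P$ is maximal, obtained by deleting a single simple root $\alpha_{i_0}$; consequently the space of invariant Kähler metrics is one dimensional, so the Kähler--Einstein metric (which exists because $M$ is Fano and is $G$-invariant by uniqueness) is, up to scale, the \emph{unique} invariant Kähler metric. Writing $\Delta^+$ for the positive roots, $\Phi=\{\gamma\in\Delta^+: n_{i_0}(\gamma)>0\}$ for the roots occurring in $T^{1,0}_oM=\bigoplus_{\gamma\in\Phi}\mathfrak g_\gamma$, and $n_{i_0}(\gamma)$ for the coefficient of $\alpha_{i_0}$ in $\gamma$, this metric is $g_\gamma=n_{i_0}(\gamma)=:m_\gamma$ up to a constant: invariant Kähler metrics correspond to elements of the one dimensional center of the Levi factor, i.e.\ to positive multiples of the fundamental coweight dual to $\alpha_{i_0}$, for which $g_\gamma\propto n_{i_0}(\gamma)$. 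The first thing I would record is the elementary but crucial additivity $m_{\gamma+\mu}=m_\gamma+m_\mu$ whenever $\gamma,\mu,\gamma+\mu$ are all roots, which will collapse several terms in the curvature tensor.

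Next I would make the reduction to holomorphic sectional curvature. Because $\Ric=c\,g$ with $c>0$, for a unit vector $X$ one has $Ric^{\perp}_{X\overline X}=c-H(X)$, where $H(X)=R(X,\overline X,X,\overline X)$ is the holomorphic sectional curvature; equivalently, summing the curvature against an orthonormal basis $\{e_i\}$ adapted to $X$ gives $Ric^{\perp}_{X\overline X}=\sum_{e_i\perp X}R(X,\overline X,e_i,\overline{e_i})$, the total of the holomorphic bisectional curvatures over $X^\perp$. Thus $Ric^{\perp}>0$ is exactly the statement $\max_{|X|=1}H(X)<c$, and by $G$-homogeneity it suffices to establish this at the base point $o$.

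The core computation would use the explicit curvature formula for invariant Kähler metrics on flag manifolds, following Itoh \cite{Itoh} and Chau--Tam \cite{ChauTam}: in the root vectors $E_\gamma$ with $|E_\gamma|^2=m_\gamma$, the components $R_{\alpha\bar\beta\gamma\bar\delta}$ are nonzero only when $\alpha-\beta+\gamma-\delta=0$ and are quadratic in the structure constants $N_{\cdot\cdot}$, with coefficients assembled from the $m_\gamma$. Feeding in the additivity above simplifies these coefficients considerably --- for instance the $\gamma+\mu$ channel, carrying the factor $m_\gamma+m_\mu-m_{\gamma+\mu}$, drops out entirely --- and I would likewise express the Einstein constant $c$ as the normalized scalar curvature in terms of $\sum_{\gamma\in\Phi}m_\gamma$ and the root lengths. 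For a root-vector direction $X=E_{\tilde\gamma}$ the inequality $H(E_{\tilde\gamma})<c$ then becomes a finite inequality purely among root data.

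The main obstacle is twofold and lies in the last step. First, $H(X)$ is a quartic on the unit sphere whose maximum need not a priori be attained at a coordinate (root) direction; I would handle this using the $K$-invariance of $H$ together with the grading $T^{1,0}_oM=\bigoplus_j\mathfrak m^{(j)}$ by the value of $n_{i_0}$ into isotropy-irreducible pieces, which confines the extremizers to highest-weight (root) vectors and to mixed vectors spanning adjacent levels, reducing the problem to finitely many explicit directions. Second, and more seriously, for the non--Hermitian-symmetric spaces --- those in which $\alpha_{i_0}$ has coefficient at least $2$ in the highest root --- the holomorphic bisectional curvature genuinely changes sign, so positivity of $Ric^{\perp}$ is not formal and hinges on the precise value of $c$; the inequality $\max H<c$ must then be certified by computation. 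This is precisely why the statement is confined to the classical groups $A_n,B_n,C_n,D_n$, for which the maximal parabolics and their root systems can be enumerated and the resulting inequalities verified case by case.
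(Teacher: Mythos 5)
Your reduction is the same as the paper's: since the metric is Einstein with constant $\mu$, the condition $Ric^{\perp}>0$ is equivalent to $\max_{|X|=1}H(X)<\mu$ at the base point, and the values of $\mu$ in root-theoretic terms can be read off from Chau--Tam \cite{ChauTam}. The genuine gap is in how you propose to bound $\max H$. Your claim that $K$-invariance of $H$ together with the grading of $T^{1,0}_oM$ confines the extremizers to root vectors and to ``mixed vectors spanning adjacent levels'' is not a theorem and is not supported by anything you wrote: the isotropy representation is far from transitive on the unit sphere, $H$ is a quartic form, and its critical points can a priori occur at arbitrary linear combinations of root vectors. Without a concrete device controlling $\max H$, your ``finite inequality purely among root data'' never materializes, and the case-by-case verification --- which is the entire content of the theorem and which you explicitly defer (``must be certified by computation'') --- cannot even be set up. The paper's device is exactly Itoh's curvature operator $Q$ on $S^2T^{1,0}_M$, defined by $\bigl(Q(X\cdot X),\overline{X\cdot X}\bigr)=R_{X\overline{X}X\overline{X}}$, which yields $H\leq \nu$ where $\nu$ is the largest eigenvalue of $Q$; the values of $\nu$ were computed by Itoh \cite{Itoh} for all classical cases, and comparison with $\mu$ from \cite{ChauTam} (e.g.\ $\mu=2r-i$ for $B_r$, $\mu=2r-i+1$ for $C_r$, $\mu=2r-i-1$ for $D_r$) gives $\nu<\mu$ in every classical case except one.

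That one exception is the second thing your plan misses: for $(C_3,\alpha_3)$, type $I\!I\!I$ Hermitian symmetric of dimension $6$, one has $\nu=\mu=4$, so the eigenvalue bound (and any bound of comparable coarseness) is inconclusive, and the paper needs a separate argument there --- it passes to the noncompact dual, the bounded symmetric domain $D^{I\!I\!I}_3$, realizes it as a totally geodesic submanifold of the type $I$ domain $D^{I}_{3,3}$, and shows by a direct matrix computation that the holomorphic sectional curvature satisfies $H\leq 2<4=\mu$. A borderline case of this kind is precisely where a deferred ``certify by computation'' step can fail. So the proposal reproduces the paper's (correct) reduction to $\max H<\mu$, but replaces the two substantive ingredients --- a usable upper bound on $\max H$ and the actual case analysis including the degenerate case $(C_3,\alpha_3)$ --- with placeholders, and hence does not constitute a proof.
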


K\"ahler C-spaces with $b_2=1$ consists of four {\it classical} sequences, plus finitely many exceptional ones. We believe that all such spaces with $n\geq 2$ (namely, except ${\mathbb P}^1$) have $Ric^{\perp}>0$. However, since our computation is based upon the curvature computations in \cite{Itoh}, which was done only for the classical ones, plus some but not all of the exceptional ones, we can not claim the result for all exceptional cases before carrying out the computation of holomorphic sectional curvature for all of them. Notice that the above result, together with Chau-Tam's work on the quadratic bisectional curvature, provides {\it many compact homogenous examples with $Ric^\perp>0$, but with negative  quadratic bisectional curvature  somewhere}.

For K\"ahler C-spaces with $b_2>1$, the conclusion of Theorem 1.1 is no longer true in general. For instance, as we shall see in discussion a bit later, the flag threefold ${\mathbb P}(T_{ {\mathbb P} \,^2} ) $, which is a K\"ahler C-space with $b_2=2$, can not admit any K\"ahler metric with $Ric^{\perp}>0$. It would be an interesting question to know what kind of K\"ahler C-spaces are in ${\mathcal M}_n^{\perp}$.

Since there are only two irreducible compact Hermitian symmetric spaces that are exceptional, in view of  \cite{Itoh},  we have the  following result.

\begin{corollary}
Let $M^n$ be a compact Hermitian symmetric space without any ${\mathbb P}^1$ factor. Then it has $Ric^{\perp}>0$.
\end{corollary}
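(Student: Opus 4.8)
The plan is to reduce the statement to Theorem 1.1 via the standard decomposition of a compact Hermitian symmetric space. Any such $M$ splits as a product $M=M_1\times\cdots\times M_k$ of irreducible compact Hermitian symmetric spaces, each of which is a K\"ahler C-space with $b_2=1$ carrying its (essentially unique) K\"ahler--Einstein metric. After discarding the ${\mathbb P}^1$ factors---which are precisely the only one-complex-dimensional irreducible factors---every remaining $M_j$ has $\dim_{\mathbb C} M_j\geq 2$. So I would first record this splitting and reduce the problem to verifying $Ric^{\perp}>0$ on each such $M_j$ and then combining the factors via the product observation from the introduction.

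For each irreducible factor of dimension $\geq 2$ I would split into two cases. If $M_j$ belongs to one of the four classical series (the complex Grassmannians, the quadrics, $Sp(n)/U(n)$, and $SO(2n)/U(n)$), then it is a classical K\"ahler C-space with $b_2=1$, and Theorem 1.1 gives $Ric^{\perp}>0$ directly. The only irreducible compact Hermitian symmetric spaces that are not classical are the two exceptional ones, $E_6/(Spin(10)\cdot U(1))$ and $E_7/(E_6\cdot U(1))$. For these I would invoke Itoh's curvature computation, which covers exactly these two spaces. Concretely, since the K\"ahler--Einstein metric satisfies $Ric=c\,g$ with $c>0$, for a unit $(1,0)$ vector $X$ one has $Ric^{\perp}_{X\overline{X}}=c-H(X)$, where $H(X)$ is the holomorphic sectional curvature; hence $Ric^{\perp}>0$ is equivalent to the strict bound $\max_X H(X)<c$, which is precisely what Itoh's computed range of the holomorphic sectional curvature yields on these two spaces.

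Finally I would assemble the factors. Each $M_j$ with $\dim_{\mathbb C} M_j\geq 2$ now has $Ric^{\perp}>0$, and being K\"ahler--Einstein with positive Einstein constant it also has $Ric>0\geq 0$. The product observation noted in the introduction---that $X\times Y$ has $Ric^{\perp}>0$ whenever both factors have $Ric^{\perp}>0$ and $Ric\geq 0$---then applies inductively to $M=M_1\times\cdots\times M_k$, giving $Ric^{\perp}>0$ on $M$. The one genuinely non-formal step, and hence the main point to check, is the two exceptional cases: Theorem 1.1 is stated only for classical C-spaces, so the corollary relies on Itoh's computation extending to $E_6/(Spin(10)\cdot U(1))$ and $E_7/(E_6\cdot U(1))$. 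Once the bound $\max H<c$ is confirmed there, everything else is the standard product reduction.
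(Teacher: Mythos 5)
Your proposal is correct and follows essentially the same route as the paper: decompose into irreducible factors, handle the classical ones by Theorem 1.1, handle the two exceptional Hermitian symmetric spaces $(E_6,\alpha_1)$ and $(E_7,\alpha_7)$ by Itoh's curvature computation (the paper uses his Table 12, giving an upper bound $\nu=2$ on $H$ against $\mu=12$ and $18$), and assemble with the product observation that $Ric^{\perp}>0$ plus $Ric\geq 0$ on each factor yields $Ric^{\perp}>0$ on the product. You also correctly identified the exceptional cases as the one step not covered by Theorem 1.1, which is exactly how the paper structures its argument.
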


On the other hand, as we shall see later, for any compact complex manifold $N$, ${\mathbb P}^1\times N$ can never admit a K\"ahler metric with $Ric^{\perp}>0$.

 Another set of examples can be obtained by considering projectivized vector bundles. Let $(M^n,g)$ be a compact K\"ahler manifold and $(E,h)$ be a holomorphic vector bundle of rank $r$ over $M^n$, equipped with a Hermitian metric $h$. Let $\pi : P={\mathbb P}(E^{\ast}) \rightarrow M$ be the projectivization of $E$, namely, for $x\in M$, $\pi^{-1}(x) = {\mathbb P}(E_x)$ is the set of all complex lines through origin in $E_x$. Note that it is the tradition in algebraic geometry to denote this space as ${\mathbb P}(E^{\ast})$. Consider the K\"ahler metric $G$ on $P$ with K\"ahler form
$$ \omega_G = \lambda\, \pi^{\ast }\omega_g + C_1(L, \hat{h}), $$
where $\lambda >0$ is a sufficiently large constant, $L$ is the dual of the tautological line bundle on $P$, and $\hat{h}$ is the metric on $L$ induced by $h$. At a point $(x,[v])\in P$, where $x\in M$ and $0\neq v\in E_x$, $C_1(L,\hat{h})$ is given by
$$ C_1(L,\hat{h}) = \omega_{\mbox{\tiny{FS}}} - \frac{\sqrt{\!-\!1}}{\ |v|^2} \Theta^h_{v\overline{v}} $$
where $\Theta^h$ is the curvature form of $(E,h)$ and $\omega_{\mbox{\tiny{FS}}}$ is the K\"ahler form of the Fubini-Study metric on the fiber of $\pi$. We have the following:

\begin{theorem}
Let $(M^n,g)$ be a compact K\"ahler manifold with $Ric^{\perp}>0$, and $(E,h)$ be a Hermitian vector bundle over $M$ of rank $r\geq 3$ such that for any $x\in M$ and any $0\neq v\in E_x$,
\begin{equation}
 Ric^{g\perp}_{X\overline{X}} + R(\det E)_{X\overline{X}} - \frac{r}{|v|^2}R^h_{v\overline{v} X\overline{X}   } \ > \ 0   \label{eq:curvature}
 \end{equation}
for any tangent vector $0\neq X\in T^{1,0}_xM$. Here $R(\det E)$ is the curvature of the determinant line bundle $\det E = \bigwedge^{\!r}\!E$ equipped with the metric induced by $h$. Then on the projectivized bundle $P={\mathbb P}(E^{\ast})$, the K\"ahler metric $G$ with $\omega_G=\lambda \,\pi^{\ast}\omega_g + C_1(L,\hat{h})$ will have $Ric^{\perp }>0$ everywhere when $\lambda$ is sufficiently large.
\end{theorem}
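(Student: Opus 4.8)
The plan is to compute the curvature of $(P,G)$ in a gauge adapted to the holomorphic submersion $\pi\colon P\to M$ and to read off the sign of $Ric^{\perp}$ as $\lambda\to\infty$. Fix $p=(x_0,[v_0])\in P$ and split $T^{1,0}_pP=\mathcal H\oplus\mathcal V$ into the horizontal subspace $\mathcal H$ defined by the Chern connection of $(E,h)$ (identified with $T^{1,0}_{x_0}M$ by $\pi_{\ast}$) and the vertical subspace $\mathcal V=T^{1,0}_{[v_0]}\mathbb P(E_{x_0})$. I would work in $g$-normal coordinates at $x_0$, an $h$-normal frame $e_1,\dots,e_r$ for $E$ with $v_0=e_1$, and fiber coordinates centered at $[e_1]$, so that the first derivatives of $G$ vanish at $p$ and $G$ is block diagonal there, with $G|_{\mathcal V}$ the Fubini--Study metric of $\mathbb P^{r-1}$ and $G|_{\mathcal H}=\lambda\,g+O(1)$. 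Thus a $G$-unit horizontal vector is $U\approx\lambda^{-1/2}\widetilde e$ (with $\widetilde e$ the horizontal lift of a $g$-unit $e$), while $G$-unit vertical vectors $W$ are of order one; I will track every curvature quantity in powers of $\lambda$ through this weighting.

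First I treat the two pure cases. For $X=W$ vertical, the horizontal contributions to $Ric(W,\overline W)$ and to the vertical Ricci trace are $O(\lambda^{-1})$, so $Ric^{\perp}_{W\overline W}$ converges to the orthogonal Ricci of the fiber at $W$; since the fiber is $\mathbb P^{r-1}$ with $r-1\ge 2$, this is positive by the previous results applied to $\mathbb P^{r-1}$ (Theorem 1.1), and this is exactly where $r\ge 3$ enters. For $X=U$ horizontal, the decisive computation is the vertical Ricci trace in a horizontal direction, which is the curvature of $\det T_{P/M}$. Using the relative Euler sequence $c_1(\det T_{P/M})=r\,c_1(L)+\pi^{\ast}c_1(\det E)$, together with the fact that $C_1(L,\hat h)$ restricts on horizontal vectors to $-\tfrac{1}{|v|^2}\Theta^h_{v\overline v}$, one gets
\begin{equation*}
\sum_{\alpha}R^G(\widetilde e,\overline{\widetilde e},W_\alpha,\overline{W_\alpha})=R(\det E)_{e\overline e}-\frac{r}{|v|^2}R^h_{v\overline v\,e\overline e}.
\end{equation*}
Adding the horizontal Ricci trace $Ric^g(e,\overline e)$ and subtracting the horizontal holomorphic sectional curvature then yields
\begin{equation*}
Ric^{\perp}_{U\overline U}=\lambda^{-1}\lf(Ric^{g\perp}_{e\overline e}+R(\det E)_{e\overline e}-\frac{r}{|v|^2}R^h_{v\overline v\,e\overline e}\ri)+O(\lambda^{-2}),
\end{equation*}
which is positive for large $\lambda$ precisely by the hypothesis \eqref{eq:curvature}.

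The heart of the argument is the mixed case $X=X_{\mathcal H}+X_{\mathcal V}$, and the point that makes it work is that in the above gauge the leading mixed Ricci vanishes. Indeed, since the base has dimension $n$, in the expansion of $\omega_G^N$ ($N=n+r-1$) the top power of $\lambda$ is $\lambda^n(\pi^{\ast}\omega_g)^n\wedge C_1(L,\hat h)^{r-1}$, and for degree reasons only the purely vertical part of $C_1(L,\hat h)^{r-1}$ survives against $(\pi^{\ast}\omega_g)^n$, so this term equals $\lambda^n(\pi^{\ast}\omega_g)^n\wedge\omega_{FS}^{\,r-1}$. Its log-density is $\log\det g(z)+\log(\text{fiber Fubini--Study density})+\mathrm{const}$, the fiber density depending on the base only through the pointwise value of $h$; since $\p h=0$ at $x_0$ and $h$ is constant along fibers, the mixed Hessian $\p_i\p_{\overline\alpha}\log\det G$ vanishes at $p$ to leading order. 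Hence $Ric^G(\widetilde e,\overline W)=O(\lambda^{-1})$, so the mixed Ricci cross term $Ric^G(U,\overline W)=O(\lambda^{-3/2})$ rather than the naive $O(\lambda^{-1/2})$, which is what prevents the horizontal--vertical coupling from overwhelming the positivity.

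With this in hand the uniform estimate is routine. Writing $s=|X_{\mathcal V}|^2$, the diagonal terms give $Ric^{\perp}_{X\overline X}\ge s\,c_V+(1-s)\lambda^{-1}c_H-(\text{lower order})$, where $c_V,c_H>0$ are the uniform bounds on the fiber $Ric^{\perp}$ and on the quantity in \eqref{eq:curvature}, obtained from compactness of $M$ and of the unit sphere bundle. I would split this sphere bundle into a vertical-dominant region $s\ge s_0$, where the order-one term $s\,c_V$ beats all cross terms (which are $O(\lambda^{-1/2})$ or smaller) once $\lambda$ is large, and a horizontal-dominant region $s\le s_0$; in the latter, rescaling by $\lambda$ and setting $\mu=\sqrt{\lambda s}$ the expression limits to $A+B\mu^2$ with $A\ge c_H$ and $B=Ric(W,\overline W)>0$ the fiber Ricci, with no surviving cross term thanks to the vanishing above. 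The main obstacle is thus this curvature bookkeeping: producing the exact match with \eqref{eq:curvature} in the horizontal direction and, above all, establishing that the mixed Ricci is of lower order in the adapted gauge so that the two positive directions do not interfere. Once these are secured, combining the two regions and invoking compactness gives $Ric^{\perp}>0$ on all of $P$ for $\lambda$ sufficiently large.
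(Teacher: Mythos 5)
Your strategy is the same as the paper's own proof: fix a point of $P$, work in $g$-normal coordinates, an $h$-adapted frame and fiber coordinates, expand the curvature of $\omega_G=\lambda\pi^*\omega_g+C_1(L,\hat h)$ in powers of $\lambda$, and check positivity in the horizontal, vertical and mixed regimes, with \eqref{eq:curvature} supplying horizontal positivity and $r\ge 3$ vertical positivity (the paper organizes this as $\Phi=\Phi_0+2\mathrm{Re}\,\Phi_1+\Phi_2+2\mathrm{Re}\,\Phi_3+\Phi_4$ and computes every component explicitly). Your Euler-sequence identity and log-determinant trick are legitimate shortcuts for two of those computations, but there are two genuine gaps. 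The first is your gauge premise: in coordinates adapted to the fibration one \emph{cannot} make all first derivatives of $G$ vanish at $p$. By the K\"ahler symmetry, the paper's computation gives $G_{i\overline{j},\alpha}=-R^h_{\alpha\overline{v}i\overline{j}}$ at the center point, a curvature component of $(E,h)$ that no choice of frame removes; K\"ahler normal coordinates would kill it, but they destroy the base/fiber block structure on which your Euler-sequence and log-density arguments rely. These surviving first derivatives are exactly what produce the quadratic correction terms (e.g. $\sum_j\varepsilon_j|R^h_{v\overline{\sigma}j\overline{y}}|^2$ in the paper's formulas); they turn out to be harmless because they enter with inverse-metric weights $\varepsilon_j=O(\lambda^{-1})$, but this must be checked, not assumed, and your trace identity for $\det T_{P/M}$ holds only up to such $O(\lambda^{-1})$ corrections.

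The more serious gap is in the mixed case. Since $Ric^{\perp}_{X\overline{X}}=Ric(X,\overline{X})-R(X,\overline{X},X,\overline{X})/|X|^2$, the expansion for $X=X_{\mathcal H}+X_{\mathcal V}$ produces cross components of the \emph{full curvature tensor}, not just of the Ricci tensor: $R(U,\overline{U},U,\overline{W})$ with weight $(1-s)^{3/2}s^{1/2}$, as well as $R(U,\overline{W},U,\overline{W})$, $R(U,\overline{W},W,\overline{W})$ and the mixed diagonal $R(U,\overline{U},W,\overline{W})$. Your log-density argument controls only the trace $Ric(U,\overline{W})$ and says nothing about these; yet your phrase ``no surviving cross term thanks to the vanishing above'' is precisely where they must be handled. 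They are not automatically negligible: in your horizontal-dominant regime the rescaled expression is $A+B\mu^2$ with $A\approx c_H$ possibly very small, and a cross term $C\mu$ with $C\sim\sqrt{\lambda}\,\sup|R(U,\overline{U},U,\overline{W})|$ destroys positivity unless $C<2\sqrt{AB}$; so you need $|R(U,\overline{U},U,\overline{W})|=O(\lambda^{-3/2})$ (at the very least $o(\lambda^{-1/2})$ uniformly). This bound is true, and the clean reason is structural: the large part $\lambda\pi^*\omega_g$ of $G$ is pulled back from $M$, hence independent of the fiber variables, so after normal coordinates on the base every curvature component of $G$ with at least one vertical index is $O(1)$ \emph{before} normalization — this is what the paper's explicit formulas $R_{y\overline{y}y\overline{\sigma}}=O(1)$ and $R_{y\overline{\sigma}y\overline{\sigma}}=R_{y\overline{\sigma}\sigma\overline{\sigma}}=0$ encode. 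Your proposal never establishes this estimate, and without it the horizontal-dominant region of your uniform argument does not close; with it (and with the first point repaired), your outline becomes essentially the paper's proof.
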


Note that the rank requirement $r\geq 3$ here is necessary, as we shall see later that any ${\mathbb P}^1$-bundle over any base space can never admit a K\"ahler metric with $Ric^{\perp}>0$.

The curvature condition (\ref{eq:curvature}) is independent of the scaling of metrics $g$ or $h$, as well as tensoring of $E$ by a line bundle. When the dimension of the base manifold is $3$ or higher,  the above theorem gives non-trivial examples of manifolds with $Ric^{\perp}>0$. For instance, when $M={\mathbb P}^n$ ($n\geq 3$) equipped with the standard metric, then for the cotangent bundle $\Omega_{   {\mathbb P}^{n}  }  $ or $E={\mathcal O}^{\oplus 2} \oplus {\mathcal O}(-1)$, with the standard metrics, one can easily check that the condition (\ref{eq:curvature}) is satisfied, hence we have the following examples:

\begin{example}\label{ex:14}
For any $n\geq 3$, the $(2n-1)$-dimensional manifold ${\mathbb P}(T_{{ \mathbb P}^n})$ and the $(n+2)$-dimensional manifold ${\mathbb P}({\mathcal O}_{{ \mathbb P}^n}^{\oplus 2} \oplus {\mathcal O}_{{ \mathbb P}^n}(1) )$ are in ${\mathcal M}^{\perp}$. Similarly, consider the splitting bundle $E={\mathcal O}(a_1) \oplus \cdots \oplus {\mathcal O}(a_r)$ over ${\mathbb P}^n$, where $a_1\geq a_2 \geq \cdots \geq a_r$. If
$$ n-1 > (a_1-a_2) + \cdots + (a_1-a_r),$$
then ${\mathbb P}(E^{\ast })$ will be in ${\mathcal M}^{\perp}$.
\end{example}

In particular, the Fano fivefold ${\mathbb P}({\mathcal O}^{\oplus 2} \oplus {\mathcal O}(1))$ over ${\mathbb P}^3$, which is not a complex homogeneous space as it has a section with negative normal bundle, admits K\"ahler metrics with $Ric^{\perp}>0$.

As an interesting contrast, when the base is two dimensional, any ${\mathbb P}^k$-bundle cannot be in ${\mathcal M}^\perp$ unless it is the product:

\begin{theorem}\label{thm:15}
Let $k\geq 2$, and let $P$ be a holomorphic fiber bundle over a compact complex surface $S$, whose fiber is ${\mathbb P}^k$. If $P$ admits a K\"ahler metric with  $Ric^{\perp}>0$, then  $S$ must be biholomorphic to ${\mathbb P}^2$ and $P$ must be biholomorphic to ${\mathbb P}^2\times {\mathbb P}^k$.
\end{theorem}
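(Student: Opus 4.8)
The engine I would isolate first is a pointwise-to-global positivity statement for curves: \emph{if a compact K\"ahler manifold $(M,\omega)$ has $Ric^{\perp}>0$, then every compact complex curve $C\subset M$ has $\deg N_{C/M}>0$.} I would prove this by restricting to $C$ and combining three standard identities. Let $X$ be a unit $(1,0)$ tangent to $C$ and $\sigma$ the second fundamental form of $C$ in $M$. The Gauss equation for a complex submanifold gives $R(X,\overline X,X,\overline X)=K_C+|\sigma(X,X)|^2$, where $K_C$ is the Gauss curvature of the induced metric; Gauss--Bonnet gives $\int_C K_C\,dA=2\pi\chi(C)$; and adjunction gives $-K_M\cdot C=\chi(C)+\deg N_{C/M}$, while the Ricci form represents $c_1(M)$ so that $\int_C Ric(X,\overline X)\,dA=2\pi(-K_M\cdot C)$. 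Subtracting the holomorphic sectional term from the Ricci term yields $2\pi\deg N_{C/M}=\int_C Ric^{\perp}_{X\overline X}\,dA+\int_C|\sigma|^2\,dA>0$. (This lemma already recovers the two facts quoted in the introduction: a fibre of a $\mathbb{P}^1$-bundle and a factor $\mathbb{P}^1\times\{pt\}$ each have trivial, hence degree-zero, normal bundle, so neither a $\mathbb{P}^1$-bundle nor $\mathbb{P}^1\times N$ can carry $Ric^{\perp}>0$.)

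Next I would record the constraints on the base. Since holomorphic $p$-forms on $P$ are pulled back from $S$ (the fibre $\mathbb{P}^k$ carries none) and $\pi_1(\mathbb{P}^k)=1$, the vanishings $h^{1,0}(P)=h^{2,0}(P)=0$ and the finiteness of $\pi_1(P)$ from \cite{NZ} give $q(S)=p_g(S)=0$ and $\pi_1(S)$ finite. The step I expect to be the main obstacle is upgrading this to rationality of $S$: one must show $S$ is uniruled, since a surface with $q=0$ and $\kappa=-\infty$ is rational. This does \emph{not} follow from the bundle structure alone (the vertical lines make $P$ uniruled but say nothing about $S$, whose maximal rationally connected quotient could equal $S$), nor from the Hodge and $\pi_1$ data alone (Enriques surfaces and certain $p_g=q=0$ surfaces of general type survive these). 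It must come from $Ric^{\perp}>0$: I would deduce it either from the rational connectedness of members of $\mathcal{M}^{\perp}$, using that the dominant image $S$ of a rationally connected manifold is rationally connected hence rational, or by arguing directly that $Ric^{\perp}>0$ forces $K_S$ to be non-pseudoeffective. Note that the scalar trace of $Ric^{\perp}>0$ only yields positive scalar curvature on $P$, whose fibre contribution masks the sign of $c_1(S)$, so a finer argument is genuinely needed here.

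Once $S$ is rational the lemma pins it down. For a smooth rational curve $C\cong\mathbb{P}^1\subset S$, the restriction $P|_C\to C$ is a $\mathbb{P}^k$-bundle over $\mathbb{P}^1$, so $P|_C=\mathbb{P}\big(\bigoplus_{i=0}^k\mathcal{O}(a_i)\big)$ with $a_0\ge\cdots\ge a_k$; taking the section $\widetilde C$ cut out by the maximal sub-line-bundle $\mathcal{O}(a_0)$, its vertical normal bundle is $\bigoplus_{j\ge1}\mathcal{O}(a_j-a_0)$, of degree $\le0$, while $N_{P|_C/P}\big|_{\widetilde C}=\pi^{\ast}N_{C/S}$ has degree $C^2$. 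Hence $\deg N_{\widetilde C/P}\le C^2$, and the lemma forces $C^2>0$. Since every rational surface other than $\mathbb{P}^2$ contains a smooth rational curve with $C^2\le0$ (a $(-1)$-curve, a ruling fibre, or the negative section of a Hirzebruch surface), we conclude $S\cong\mathbb{P}^2$.

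It remains to show the bundle is trivial. Since the Brauer group of $\mathbb{P}^2$ vanishes, we may write $P=\mathbb{P}(E^{\ast})$ for a rank-$(k+1)$ bundle $E$. Applying the previous computation to an arbitrary line $C\subset\mathbb{P}^2$, where now $C^2=1$, the lemma gives $1+\sum_{j\ge1}(a_j-a_0)>0$; as the $a_i$ are integers this forces $a_0=\cdots=a_k$, i.e.\ $E|_C$ is balanced on \emph{every} line. A bundle on $\mathbb{P}^2$ whose restriction to every line is $\mathcal{O}(a)^{\oplus(k+1)}$ is itself $\mathcal{O}(a)^{\oplus(k+1)}$ (after the twist $E(-a)$ it is trivial on all lines, hence trivial), so $P=\mathbb{P}(E^{\ast})\cong\mathbb{P}^2\times\mathbb{P}^k$, as claimed. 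Thus the only genuinely delicate point is the uniruledness of $S$ flagged above; everything else is a direct consequence of the normal-bundle lemma together with the splitting behaviour of bundles over rational curves.
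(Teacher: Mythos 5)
Your normal-bundle lemma, the section argument over curves of non-positive self-intersection, and the final uniform-triviality step are all correct, and they closely parallel the paper's actual route: the lemma is equivalent (for smooth curves, via adjunction) to the paper's Theorem \ref{thm:41} and Corollary 4.3, and your analysis of $P|_C=\mathbb{P}\bigl(\bigoplus_i\mathcal{O}(a_i)\bigr)$ over a line, forcing all $a_i$ equal and then invoking triviality of uniformly trivial bundles on $\mathbb{P}^2$, is essentially the paper's Lemmas 4.2, 4.3 and the citation of Okonek--Schneider--Spindler. However, the step you flagged as the ``main obstacle'' is a genuine gap, and neither of your proposed fixes closes it. Rational connectedness of manifolds with $Ric^{\perp}>0$ is precisely what this paper leaves as an open hope (it is stated only as a belief in the introduction), so it cannot be invoked; and no argument is offered for why $Ric^{\perp}>0$ on $P$ should force $K_S$ to be non-pseudoeffective --- as you yourself observe, positivity of scalar curvature on $P$ says nothing about $c_1(S)$ because of the fibre contribution. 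With only $q(S)=p_g(S)=0$ and $\pi_1(S)$ finite in hand, simply connected Dolgachev surfaces ($\kappa=1$), Enriques surfaces ($\kappa=0$), and simply connected general type surfaces such as the Barlow surface all survive, so the proposal does not prove the theorem.

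The paper closes exactly this gap by a case analysis on the Kodaira dimension of $S$, using tools your proposal never engages. First, Theorem \ref{thm:16} (the Frankel-type intersection theorem) excludes $(-1)$- and $(-2)$-curves in $S$: such a curve can be blown down, a smooth curve avoiding the image point pulls back to a hypersurface of $P$ disjoint from the preimage of the exceptional curve, a contradiction; hence $S$ is minimal, and $K_S$ is ample if $S$ is of general type. Corollary 1.7 forbids $S$ (hence $P$) from fibering over a curve, killing $\kappa=1$ and ruled surfaces, so $\kappa=-\infty$ gives $\mathbb{P}^2$. For $\kappa=0$, a finite cover of $S$ is a torus or K3, whose holomorphic $2$-form pulls back to a finite cover of $P$ (which still carries a $Ric^{\perp}>0$ metric), contradicting the $h^{2,0}=0$ vanishing of \cite{NZ}. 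The hardest case, $\kappa=2$, needs a new Bochner argument: $p_g=q=0$ and $K_S^2\geq 1$ give $h^0(2K_S)=1+K_S^2\geq 2$, and a bicanonical section pulls back to a section $s$ of $L^{\otimes 2}$ with $L=p^{\ast}K_S\subset\bigwedge^{\!2}\Omega_P$; at the maximum of $|s|^2$ one extracts, using a local square root $t$ of $s$ written as $\lambda\,\varphi_1\wedge\varphi_2$ in a unitary coframe, the inequality $R_{1\overline{1}v\overline{v}}+R_{2\overline{2}v\overline{v}}\leq 0$ for all $v$, which contradicts $Ric^{\perp}>0$ exactly as in the vanishing theorem of \cite{NZ}. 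Some replacement for this entire block of argument is needed before your steps 3 and 4 can be applied.
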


 So in particular, any non-trivial ${\mathbb P}^k$-bundle over ${\mathbb P}^2$ does not admit any K\"ahler metric with $Ric^{\perp}>0$, even though both the fiber and the base do.

 On the non-existence side,  an observation to the condition $Ric^\perp>0$ is the  following result which generalizes a theorem of Frankel \cite{Frankel}:

\begin{theorem} \label{thm:16}
Let $M^n$ be a compact K\"ahler manifold with $Ric^{\perp}>0$. If $Y_1$ and $Y_2$  are irreducible divisors in $M$, then $Y_1\cap Y_2 \neq \phi$.
\end{theorem}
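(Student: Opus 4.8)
The plan is to adapt Frankel's second--variation argument, exploiting the special feature of divisors that makes the weaker hypothesis $Ric^{\perp}>0$ (rather than positive bisectional curvature) sufficient. Suppose for contradiction that $Y_1\cap Y_2=\emptyset$. Since $M$ is compact, $d(Y_1,Y_2)>0$ is realized by a minimizing geodesic $\gamma:[0,\ell]\to M$ with $\gamma(0)=p\in Y_1$, $\gamma(\ell)=q\in Y_2$, meeting both divisors orthogonally; write $T=\gamma'$ for the unit tangent. Because $M$ is K\"ahler, parallel transport $P$ along $\gamma$ commutes with $J$ and is a complex-linear isometry $T_pM\to T_qM$. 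The normal space of a divisor is a single complex line, and since $\gamma$ is orthogonal to $Y_1$ at $p$ and $T_pY_1$ is $J$-invariant, one checks that $T_pY_1\perp T$ and $T_pY_1\perp JT$; as $\dim_{\mathbb C}T_pY_1=n-1$ this forces $T_pY_1=\Pi$, where $\Pi:=\{T,JT\}^{\perp}$ is the complex hyperplane orthogonal to the complex line of $T$. The same holds at $q$, so $P(T_pY_1)=P(\Pi)=T_qY_2$. This coincidence of the \emph{full} orthogonal complement is exactly what is special about divisors, and is what will allow me to sum over a complete complex basis of directions orthogonal to $T$.

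Next I would build the variation fields. Choose a unitary frame $\{E_2,\dots,E_n\}$ of $\Pi^{1,0}$ at $p$ and extend each by parallel transport along $\gamma$; since $P$ is a complex isometry, $\{E_i(\ell)\}$ is a unitary frame of $P(\Pi)^{1,0}=T_qY_2^{1,0}$. Taking real and imaginary parts yields real parallel fields $Z_i,JZ_i$ along $\gamma$, each orthogonal to $T$, with $Z_i(0),JZ_i(0)\in T_pY_1$ and $Z_i(\ell),JZ_i(\ell)\in T_qY_2$; these are admissible variations for the submanifold--to--submanifold length functional. I then sum the index forms over this full real basis. Because each field is parallel, $\nabla_T Z_i=0$, so the kinetic terms $|\nabla_T Z_i|^2$ drop out and the curvature terms assemble, using the symmetries of $R$, as
\begin{equation*}
\sum_{i=2}^{n}\big(\langle R(Z_i,T)T,Z_i\rangle+\langle R(JZ_i,T)T,JZ_i\rangle\big)=\Ric(T,T)-\langle R(T,JT)JT,T\rangle,
\end{equation*}
which is a positive multiple of the orthogonal Ricci curvature $Ric^{\perp}$ in the complex direction of $\gamma$, hence positive by hypothesis along all of $\gamma$. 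Thus the total curvature contribution to the summed index form is strictly negative.

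The remaining ingredient is the boundary terms. For a geodesic meeting $Y_1$ and $Y_2$ orthogonally, each field's index form carries second--fundamental--form contributions $\pm\langle \mathrm{II}(Y_0,Y_0),T\rangle$ at $p$ and $q$. Summed over the full real orthonormal bases $\{Z_i(0),JZ_i(0)\}$ of $T_pY_1$ and $\{Z_i(\ell),JZ_i(\ell)\}$ of $T_qY_2$, these traces become the mean curvature vectors of $Y_1$ at $p$ and of $Y_2$ at $q$, which vanish because complex submanifolds of a K\"ahler manifold are minimal. Therefore the summed index form equals $-\int_0^\ell(\text{positive }Ric^{\perp})\,dt<0$, contradicting the fact that a minimizing geodesic has nonnegative index form for all admissible variations. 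This proves $Y_1\cap Y_2\neq\emptyset$. (It also explains the restriction to divisors: for subvarieties of complementary dimension the common tangent directions form only a proper subspace of $\Pi$, over which the curvature sum need not reconstitute the full $Ric^{\perp}$, so there one genuinely needs positive bisectional curvature.)

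The step I expect to demand the most care is the reduction to regular points: irreducible divisors may be singular, so I must guarantee that a distance--minimizing pair $(p,q)$ can be taken in the smooth loci $Y_1^{\mathrm{reg}}$, $Y_2^{\mathrm{reg}}$, since otherwise neither the orthogonality statement nor the free--boundary second--variation formula is available. I would handle this separately by showing that the minimum of $x\mapsto d(x,Y_2)$ on $Y_1$ is attained at a regular point, using that the singular locus is a lower--dimensional analytic set together with the first--variation (tangent cone) behavior of the distance function; after this the argument above applies verbatim. The other point to verify carefully is the precise sign convention in the free--boundary second--variation formula, so that the summed boundary terms really collapse into the (vanishing) mean curvatures.
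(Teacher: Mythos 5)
Your smooth-case argument is correct and is essentially the paper's own proof: the same Frankel second-variation scheme with a minimizing geodesic $\gamma$, parallel transport of the complex hyperplane $\Pi=\{T,JT\}^{\perp}$ (which equals $T_pY_1$ and $T_qY_2$ at the endpoints), and the curvature sum over a parallel unitary frame of $\Pi$ reassembling into $Ric^{\perp}(T)>0$. Your boundary-term treatment (the summed second fundamental form contributions are the mean curvature vectors of $Y_1$, $Y_2$, which vanish because complex submanifolds of a K\"ahler manifold are minimal) is just the trace form of the paper's pairwise cancellation $\nabla_XX+\nabla_{JX}JX=J[JX,X]\perp\gamma'$, so there is no substantive difference there.

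The genuine gap is your proposed reduction to regular points. You plan to show that the minimum of $x\mapsto d(x,Y_2)$ on $Y_1$ is attained at a point of $Y_1^{\mathrm{reg}}$, citing the lower dimension of the singular locus; but this claim is simply false in general, and no curvature hypothesis can help here since at this stage we are arguing by contradiction inside a configuration whose existence we are trying to rule out. The nearest point can be forced to be singular: for the cuspidal curve $Y_1=\{y^2=x^3\}$ and the line $Y_2=\{x=-1\}$ in flat $\mathbb{C}^2$, points of $Y_1$ are $(t^2,t^3)$, the distance to $Y_2$ is at least $|t^2+1|\ge 1$, and equality holds only at the cusp $t=0$; lower-dimensionality of the singular set does not prevent it from carrying the minimum. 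The paper instead works \emph{directly at a singular endpoint} $p$: it considers the tangent cone $C_p\subset T_pM$ of $Y_1$ at $p$ and invokes Whitney's theorem that every $v\in C_p$ is the initial velocity of a smooth arc inside $Y_1$. This gives, first, a first-variation argument (since $C_p$ is a complex cone, $-v\in C_p$ whenever $v\in C_p$, so any direction not orthogonal to $\gamma'(0)$ would produce a shorter path to $Y_2$), forcing the support of $C_p$ to lie in, and hence by dimension to equal, the hyperplane $H_p=\Pi$; and second, it provides legitimate competitor variations whose endpoints move inside $Y_1$ along those arcs, so that the negative summed second variation contradicts minimality. Without this device (or an equivalent one, e.g.\ the energy-functional formulation cited in the paper), your proof covers only the case where the closest points happen to be smooth points of $Y_1$ and $Y_2$, which is strictly weaker than the theorem as stated for arbitrary irreducible divisors.
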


As an immediate corollary, we know that  manifolds with $Ric^\perp$ cannot be the blowing up of a (smooth or singular) point, or a fiberation over a curve:

\begin{corollary}
Let $M^n$ be a compact K\"ahler manifold with $Ric^{\perp}>0$. Then there exists no surjective holomorphic map from $M^n$ onto a complex curve, and there exists no birational morphism $f: M \rightarrow Z$ onto a normal variety $Z$, where a smooth hypersurface in $M$ is mapped to a (smooth or singular) point.
\end{corollary}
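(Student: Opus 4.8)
The plan is to deduce both non-existence statements directly from Theorem~\ref{thm:16}, which forbids a pair of disjoint irreducible divisors in $M$. In each situation I will exhibit two irreducible divisors that do not meet, and the contradiction finishes the argument.

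First I would treat the fibration statement. Suppose $f: M^n \to C$ is a surjective holomorphic map onto a complex curve $C$. Since $M$ is connected and $f$ is proper and nonconstant, every nonempty fiber has pure dimension $n-1$: the generic fiber has dimension $n-1$, upper semicontinuity of fiber dimension forces every fiber to have dimension at least $n-1$, and no fiber can contain an $n$-dimensional component, for such a component would be all of $M$ and force $f$ to be constant, contradicting surjectivity onto $C$. Choosing two distinct points $p\neq q$ in $C$, the fibers $f^{-1}(p)$ and $f^{-1}(q)$ are disjoint divisors; selecting an irreducible $(n-1)$-dimensional component $Y_1\subset f^{-1}(p)$ and $Y_2\subset f^{-1}(q)$ yields two disjoint irreducible divisors, contradicting Theorem~\ref{thm:16}.

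Next I would handle a birational morphism $f: M\to Z$ contracting a smooth hypersurface $E$ to a point $z=f(E)$. The key idea is to produce a divisor in $M$ avoiding $E$ by pulling back a divisor on $Z$ that avoids the single point $z$. Here I use that a compact K\"ahler manifold with $Ric^{\perp}>0$ is projective (by \cite{NZ}); since $f$ is a proper birational morphism, $Z$ is then a normal projective variety, and I may fix an ample line bundle $\mathcal{O}(1)$ on $Z$. For $m\gg 0$ the system $|\mathcal{O}(m)|$ is very ample, so there is an effective Cartier divisor $D\in|\mathcal{O}(m)|$ with $z\notin D$. Because $f$ is surjective and birational, $f^{\ast}D$ is a nonempty effective divisor on $M$, and since every point of $E$ maps to $z\notin D$, its support $f^{-1}(D)$ is disjoint from $E$. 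Taking a connected (hence irreducible) component $E_0$ of the smooth hypersurface $E$ as $Y_1$ and an irreducible component of $f^{-1}(D)$ as $Y_2$ again gives two disjoint irreducible divisors, contradicting Theorem~\ref{thm:16}.

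The routine parts are the fiber-dimension bookkeeping in the first case and the existence of a very ample system separating the point $z$ in the second. The main obstacle I anticipate is that the second argument genuinely relies on $Z$ being projective, so that effective divisors avoiding $z$ exist in abundance; this is exactly where the projectivity of $M$ from \cite{NZ} is invoked. Were one to avoid that input, one would instead need a more delicate local construction of a divisor whose total transform misses the contracted hypersurface $E$.
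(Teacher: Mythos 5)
Your strategy coincides with the paper's: the corollary is deduced from Theorem~\ref{thm:16} by exhibiting two disjoint irreducible divisors (the paper treats this as immediate, and uses the very same trick again in its Lemma on ${\mathbb P}^{k}$-bundles, via proper transforms of divisors avoiding the blown-down point). Your fibration half — every fiber of $f:M\to C$ has pure dimension $n-1$, fibers over distinct points are disjoint, then pass to irreducible components — is complete and correct.

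The contraction half, however, contains one genuine flaw: the assertion that ``since $f$ is a proper birational morphism, $Z$ is then a normal projective variety.'' Projectivity of the source does not force projectivity of the target of a birational morphism: by Chow's lemma, every complete variety — including normal complete varieties that are not projective — receives a surjective birational morphism from a projective variety, so the implication you invoke is false in general, and the corollary only assumes $Z$ is a normal variety. Hence you cannot simply fix an ample line bundle on $Z$. The gap closes easily while keeping your structure intact: choose an affine open neighborhood $U\subset Z$ of $z=f(E)$. Since $Z$ is complete (being the image of the compact $M$) and of dimension $n\geq 2$, the closed set $Z\setminus U$ is nonempty; and since $Z$ is normal, every irreducible component of the complement of an affine open subset has pure codimension one (Goodman's theorem — otherwise regular functions on $U$ would extend across a codimension-two complement, contradicting affineness). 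Let $D$ be such a component; then $z\notin D$, so $f^{-1}(D)$ is disjoint from $E$, and the strict transform of $D$ (an irreducible divisor in $M$, as $f$ is birational) together with $Y_1=E_0$ violates Theorem~\ref{thm:16}. Alternatively, your ample-divisor argument is valid verbatim whenever $Z$ is known to be projective — which covers the paper's actual applications, where $Z$ is the target of an extremal contraction — but as a proof of the corollary as stated it is incomplete without the repair above.
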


 A Lefschetz type theorem can also be proved for compact K\"ahler manifolds with $Ric^\perp>0$. Namely {\it for any smooth complex hypersurface $Y$, the induced map $\iota_*: \pi_1(Y)\to \pi_1(M)$ is surjective.}

For $n=2$, $Ric^{\perp}$ is the same as orthogonal bisectional curvature. So the result of \cite{GuZhang} implies that the only $M^2$ which admits a K\"ahler metric with $Ric^{\perp}>0$ is  ${\mathbb P}^2$. It turns out that in dimension $3$ and $4$, $\mathcal{M}^\perp_3$ and $\mathcal{M}^\perp_4$ are also rather small, thanks to the powerful cone-contraction theorems by Mori \cite{Mori} and Koll\'ar \cite{Kollar} and the numerous follow up works afterwards. In dimension three we have the following

\begin{theorem}\label{thm:18}
Let $M^3$ be a compact K\"ahler manifold with $Ric^{\perp}>0$, then $M^3$ is biholomorphic to either ${\mathbb P}^3$  or ${\mathbb Q}^3$, the smooth quadratic hypersurface in ${\mathbb P}^4$.
\end{theorem}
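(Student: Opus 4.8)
The plan is to combine the structure results of \cite{NZ} with Mori's cone and contraction theory, the single new analytic ingredient being a lower bound on the anticanonical degree of rational curves that is forced by $Ric^{\perp}>0$.

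First I would record the starting data. By \cite{NZ}, $M^3$ is projective, has finite fundamental group, and $h^{p,0}=0$ for all $p\geq 1$. I would then observe that $Ric^{\perp}>0$ forces positive scalar curvature: averaging $Ric^{\perp}_{X\overline X}$ over all unit $(1,0)$-vectors $X$ produces a fixed positive multiple of the scalar curvature (the average of $Ric(X,\overline X)$ is $\mathrm{Scal}/n$, while the average of the holomorphic sectional curvature $R(X,\overline X,X,\overline X)$ is $2\,\mathrm{Scal}/(n(n+1))$, and for $n\geq 2$ their difference is a positive multiple of $\mathrm{Scal}$). Hence $\mathrm{Scal}>0$ everywhere, so $c_1(M)\cdot[\omega]^{2}>0$ and $K_M$ is not nef. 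This lets me run the Cone Theorem on the projective threefold $M$.

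Second, and this is the crux, I would prove the curvature lemma: \emph{every immersed rational curve $C=f(\mathbb{P}^1)\subset M$ satisfies $-K_M\cdot C\geq 3$.} Writing the exact sequence $0\to TC\to f^{\ast}TM\to N_C\to 0$ and equipping the quotient $N_C$ with the induced metric, the Gauss--Codazzi equation gives that the curvature of the quotient bundle dominates the restriction of the ambient curvature. Contracting the first Chern form of $N_C$ in the tangent direction $X=f_{\ast}\partial_z$ yields exactly $\sum_{\beta}R(X,\overline X,e_\beta,\overline e_\beta)=Ric^{\perp}_{X\overline X}|X|^{2}>0$, where $\{e_\beta\}$ is an orthonormal frame of $TC^{\perp}$. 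Integrating over $\mathbb{P}^1$ gives $\deg N_C\geq \frac{1}{2\pi}\int_{\mathbb{P}^1}Ric^{\perp}>0$, hence the integer $\deg N_C\geq 1$ and $-K_M\cdot C=2+\deg N_C\geq 3$. (For non-immersed extremal curves one passes to the normalization; the inequality is sharp, being attained by lines on $\mathbb{Q}^3$.)

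Finally I would feed this into Mori theory. Since $K_M$ is not nef, there is a $K_M$-negative extremal ray with contraction $\phi:M\to Z$; I claim $\rho(M)=1$. Indeed, if $\rho(M)\geq 2$ the contraction is non-trivial, and every possibility is excluded: a contraction onto a curve is forbidden by the Corollary to Theorem \ref{thm:16} (no surjection onto a complex curve); a conic bundle over a surface has fibers $F$ with $-K_M\cdot F=2<3$; and a divisorial contraction has exceptional rational curves $\ell$ with $-K_M\cdot\ell\leq 2<3$ (the contraction of a smooth divisor to a point being also excluded directly by the Corollary). Each of these contradicts the curvature lemma (or the Corollary), so $\rho(M)=1$, and the unique ray being $K_M$-negative forces $-K_M$ to be ample: $M$ is Fano of Picard number one. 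For a general point $x$ the minimal rational curves through $x$ satisfy $2\leq -K_M\cdot C\leq n+1=4$ by Mori's bound, while the lemma gives $-K_M\cdot C\geq 3$; thus $-K_M\cdot C\in\{3,4\}$. By the characterization of projective space and quadrics through minimal rational curves (Cho--Miyaoka--Shepherd-Barron for the value $n+1$, and the corresponding characterization of $\mathbb{Q}^n$ for the value $n$ in Picard number one), $M$ is biholomorphic to $\mathbb{P}^3$ when the value is $4$ and to $\mathbb{Q}^3$ when it is $3$. The main obstacle I anticipate is a clean and fully rigorous proof of the curvature lemma --- getting the Gauss-equation sign right and handling possibly singular extremal curves --- together with invoking the borderline $\mathbb{Q}^n$ characterization ($-K_M\cdot C=n$), which is more delicate than the $\mathbb{P}^n$ case.
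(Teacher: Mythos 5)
Your proposal is correct and follows essentially the same route as the paper: your ``curvature lemma'' is exactly the paper's Theorem \ref{thm:41} and Corollary 4.3 (proved there by the same curvature-monotonicity/Gauss argument, via the normalization for singular curves), the Mori-theoretic exclusion of birational and fiber-type contractions matches the paper's case analysis (E), (C), (D), and your final appeal to Cho--Miyaoka--Shepherd-Barron plus the quadric characterization is precisely what the paper invokes the Dedieu--H\"oring theorem (pseudo-index $i(M)\geq \dim M$ implies $\mathbb{P}^n$ or $\mathbb{Q}^n$) to accomplish in one stroke.
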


In dimension four, we only have a partial result:

\begin{theorem}\label{thm:19}
Let $M^4$ be a compact K\"ahler manifold with $Ric^{\perp}>0$, then $M^4$ is biholomorphic to either ${\mathbb P}^2\times {\mathbb P}^2$,   or a Fano fourfold with $b_2=1$ and with pseudo index $i\geq 3$.
\end{theorem}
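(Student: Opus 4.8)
The plan is to run Mori theory on $M^4$, feeding in the curvature hypothesis only through two facts: that $M$ is rationally connected, and that every rational curve on $M$ has anticanonical degree at least $3$. By \cite{NZ} we already know $M$ is projective with finite $\pi_1$ and $h^{p,0}=0$; I will use in addition that $Ric^{\perp}>0$ forces $M$ to be uniruled, indeed rationally connected, so that $K_M$ is not pseudo-effective (hence not nef) and the Cone and Contraction Theorems of Mori \cite{Mori} and Koll\'ar \cite{Kollar} apply. Note at the outset that $\mathbb{P}^2\times\mathbb{P}^2$ genuinely belongs to $\mathcal{M}^{\perp}_4$: since $\mathbb{P}^2$ carries a K\"ahler--Einstein metric with $Ric^{\perp}>0$ and $Ric>0$, the product construction of the introduction applies.

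The quantitative heart is the claim that the pseudo-index of $M$ is at least $3$, i.e. $-K_M\cdot C\ge 3$ for every rational curve $C$. I would prove this by a Frankel-type integral identity along the normalization $f:\mathbb{P}^1\to C$, in the same spirit as Theorem~\ref{thm:16}. Writing $X$ for the unit tangent along $C$, the defining identity reads $Ric^{\perp}_{X\overline{X}}=Ric_{X\overline{X}}-R_{X\overline{X}X\overline{X}}$. Integrating over $\mathbb{P}^1$ against the induced area form, the Ricci term computes the anticanonical degree, $\frac{1}{2\pi}\int Ric_{X\overline{X}}\,dA=-K_M\cdot C$, while the Gauss equation for the complex curve $C\subset M$ bounds the ambient holomorphic sectional curvature below by the intrinsic Gaussian curvature, so that $\frac{1}{2\pi}\int R_{X\overline{X}X\overline{X}}\,dA\ge\frac{1}{2\pi}\int K_{\mathrm{intr}}\,dA=\chi(\mathbb{P}^1)=2$ by Gauss--Bonnet. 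Hence
\[
0<\frac{1}{2\pi}\int_{\mathbb{P}^1} Ric^{\perp}_{X\overline{X}}\,dA=(-K_M\cdot C)-\frac{1}{2\pi}\int_{\mathbb{P}^1} R_{X\overline{X}X\overline{X}}\,dA\le(-K_M\cdot C)-2.
\]
At the finitely many non-immersed points of $f$ the induced metric only acquires extra positive curvature, which strengthens the estimate; so $-K_M\cdot C\ge 3$ for all rational curves.

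Since $K_M$ is not nef, there is a $K_M$-negative extremal ray $R$ with contraction $\varphi_R:M\to Y$, and by the previous step its length satisfies $l(R)\ge 3$. I would then invoke the Ionescu--Wi\'sniewski inequality $\dim E+\dim F\ge\dim M+l(R)-1$, with $E$ the locus of $R$ and $F$ a fiber component. A divisorial $\varphi_R$ has $\dim E=3$, forcing $\dim F\ge 3$ and hence that $\varphi_R$ contracts the divisor $E$ to a point, which is excluded by the Corollary to Theorem~\ref{thm:16}; a small $\varphi_R$ has $\dim E\le 2$ and $\dim E+\dim F\le 4<6$, impossible. Thus every extremal ray is of fiber type, $\dim F\ge l(R)-1\ge 2$, and $\dim Y\le 2$. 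The case $\dim Y=1$ is ruled out by the same Corollary (no surjection onto a curve). If $\dim Y=0$ then $\rho(M)=1$, so $-K_M$ is ample and $M$ is a Fano fourfold with $b_2=1$ and pseudo-index $\ge 3$: the first alternative. If $\dim Y=2$ then $\dim F=2$, so $l(R)=3$ and the general fiber is a del Pezzo surface every rational curve of which has anticanonical degree $\ge 3$, forcing $F\cong\mathbb{P}^2$; being a fiber-type contraction of maximal length $l(R)=\dim F+1$, $\varphi_R$ is then a $\mathbb{P}^2$-bundle over a smooth surface $Y$ by a Fujita-type structure theorem, and Theorem~\ref{thm:15} gives $Y\cong\mathbb{P}^2$ and $M\cong\mathbb{P}^2\times\mathbb{P}^2$: the second alternative. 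Alternatively, given $i\ge 3$ and that $M$ is Fano, Wi\'sniewski's solution of the boundary case of the generalized Mukai conjecture yields the same dichotomy, $\rho=1$ or $M\cong\mathbb{P}^2\times\mathbb{P}^2$.

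The Mori-theoretic bookkeeping is essentially forced once the two inputs are in hand, so the genuine content lies in those inputs. The harder one is rational connectedness, equivalently that $K_M$ is not pseudo-effective: here $Ric^{\perp}>0$ must be exploited carefully, since it says nothing about the sign of the full Ricci curvature and so does not make $M$ Fano a priori --- this non-effectivity of $K_M$ is precisely the structural feature the paper seeks to establish. The pseudo-index estimate, by contrast, is a clean Gauss--Bonnet computation, and the only remaining soft point is the projective-bundle structure in the $\dim Y=2$ case, which is the standard description of extremal fiber contractions of maximal length.
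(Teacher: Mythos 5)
Your outline follows the same route as the paper: the pseudo-index bound $i(M)\geq 3$ is exactly Theorem \ref{thm:41} (proved there by the same Gauss--Bonnet/curvature-decreasing argument you sketch), and the endgame --- every extremal ray of fiber type, target of dimension $0$ or $2$, then a projective-bundle structure theorem plus Theorem \ref{thm:15} in the surface-base case --- matches the paper's use of \cite{AO}, \cite{AM} and \cite{HN}, with your Ionescu--Wi\'sniewski bookkeeping as a legitimate substitute for the first citation. However, two steps are genuinely missing. First, nothing in your argument produces a $K_M$-negative extremal ray: you assume that $Ric^{\perp}>0$ makes $M$ uniruled, ``indeed rationally connected, so that $K_M$ is not pseudo-effective,'' and your closing paragraph concedes this is left unproved. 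As stated this is both a gap and an overstatement: rational connectedness of manifolds with $Ric^{\perp}>0$ is not known (the paper only conjectures it; in dimension four it comes out a posteriori from the classification), and non-pseudo-effectivity of $K_M$ is equivalent to uniruledness, not to rational connectedness. What the proof actually needs, and what the paper uses, is only that $K_M$ is not nef, and this is cheap: averaging $Ric^{\perp}_{X\overline{X}}=Ric_{X\overline{X}}-H(X)$ over the unit sphere of $T_xM$ gives $\frac{n-1}{n(n+1)}S>0$, so the scalar curvature is positive, hence $K_M\cdot[\omega_g]^{3}<0$ and $K_M$ cannot be nef. Without this input the Mori machinery never starts, so it cannot be waved off as ``the structural feature the paper seeks to establish'' --- in the paper it is a one-line observation, not the hard part.

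Second, in the $\dim Y=2$ case the structure theorem you appeal to does not apply as stated. Maximal length alone ($l(R)=\dim F+1$ for the general fiber) does not make $\phi_R$ a $\mathbb{P}^2$-bundle: the theorem of H\"oring--Novelli (Theorem 1.3 of \cite{HN}), which is what the paper invokes, assumes the contraction is \emph{equidimensional}, and a priori a special fiber could be $3$-dimensional. The paper obtains equidimensionality from Part 6 of Theorem 4.1.3 of \cite{AM} before quoting \cite{HN}; you never address jumping fibers. (The gap is fixable within your framework: a $3$-dimensional fiber component would be an irreducible divisor disjoint from $\phi_R^{-1}(\Gamma)$ for a curve $\Gamma\subset Y$ avoiding its image point, contradicting Theorem \ref{thm:16}.) A smaller point of the same kind: to exclude divisorial rays you cite Corollary 1.7, which requires the contracted hypersurface to be \emph{smooth}, while the exceptional divisor of a divisorial contraction need not be smooth a priori. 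Either argue via Theorem \ref{thm:16} directly, pairing $E$ with the preimage of an ample divisor on $Z$ missing $\phi_R(E)$, or quote \cite{AO} as the paper does, which identifies such a contraction as the blow-up of a smooth point.
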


The pseudo index $i(M)$ of a Fano manifold $M^n$ is defined to be the minimum of the intersection number $K^{\!-1\!}_{\!M} \,C$,  where $K^{\!-1\!}_{\!M}$ is the anti-canonical line bundle and $C$ is any rational curve in $M$.

Recall that a {\em del Pezzo manifold} $M^n$ is defined as a Fano manifold with index $n-1$, where the index is the largest integer $r$ such that $K_M^{-1}=rA$ for an ample divisor $A$. For $n\geq 3$, such manifolds were completely classified by Fujita in \cite{Fujita}, arranged by their degree $d$ which is defined as $A^n$. They are:

$\bullet$ $d=1$: $X^n_6\subset {\mathbb P}(1^{n-1}, 2, 3)$, a degree $6$ hypersurface in the weighted projective space.

$\bullet$ $d=2$: $X^n_4\subset {\mathbb P}(1^{n}, 2)$, a degree $4$ hypersurface in the weighted projective space.

$\bullet$ $d=3$: $X^n_3\subset {\mathbb P}^{n+1}$, a cubic hypersurface.

$\bullet$ $d=4$: $X^n_{2,2}\subset {\mathbb P}^{n+2}$, a complete intersection of two quadrics.

$\bullet$ $d=5$: $Y^n$, a linear section of ${\mathbb Gr}(2,5) \subset {\mathbb P}^9$.

$\bullet$ $d=6$: ${\mathbb P}^1\!\times \! {\mathbb P}^1\!\times \!{\mathbb P}^1$, or ${\mathbb P}^2\!\times \! {\mathbb P}^2$, or the flag threefold ${\mathbb P}(T_{ \!{\mathbb P}^{2}} )$.

$\bullet$ $d=7$: ${\mathbb P}^3\# \overline{{\mathbb P}^3} $, the blowing up of ${\mathbb P}^3$ at a point.

We propose the following

\begin{conjecture}
A compact complex manifold $M^4$ of dimension $4$ admits a K\"ahler metric with $Ric^{\perp}>0$ if and only if $M^4$  is biholomorphic to  ${\mathbb P}^4$, or $ {\mathbb Q}^4 $, or a del Pezzo fourfold: $X^4_6$, $X^4_4$, $X^4_3$, $X^4_{2,2}$, $Y^4$, or
${\mathbb P}^2\!\times \!{\mathbb P}^2$.
\end{conjecture}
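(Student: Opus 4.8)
The plan is to prove the two implications separately, treating the classification (``only if'') direction as an algebro-geometric refinement of Theorem \ref{thm:19}, and the existence (``if'') direction by exhibiting, for each listed manifold, a K\"ahler metric with $Ric^{\perp}>0$.

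For the ``only if'' direction I would start from Theorem \ref{thm:19}, which already reduces any $M^4\in{\mathcal M}_4^{\perp}$ to either ${\mathbb P}^2\times{\mathbb P}^2$ (already on the list) or a Fano fourfold with $b_2=1$ and pseudo index $i\geq 3$. It then remains to classify the latter. Writing $K_M^{-1}=rA$ with $A$ the ample generator of $\operatorname{Pic}(M)$ and $r$ the Fano index, every rational curve $C$ satisfies $K_M^{-1}\!\cdot C=r\,(A\cdot C)$, so $r\mid i$; since $i\leq 5$ this leaves only $i\in\{3,4,5\}$. The cases $i=5$ and $i=4$ are settled by the pseudo-index characterizations of projective space and of the quadric (Cho--Miyaoka--Shepherd-Barron, Kebekus, and the quadric analogue of Wi\'{s}niewski), giving $M\cong{\mathbb P}^4$ and $M\cong{\mathbb Q}^4$ respectively. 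When $i=3$ the divisibility $r\mid 3$ forces $r\in\{1,3\}$; if $r=3$ then $M$ is a del Pezzo fourfold with $b_2=1$, and Fujita's classification \cite{Fujita} yields exactly $X^4_6,\,X^4_4,\,X^4_3,\,X^4_{2,2},\,Y^4$.

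The single remaining possibility, $i=3$ with $r=1$, is the main obstacle on the classification side: one must exclude index-one Fano fourfolds of Picard number one all of whose rational curves have anticanonical degree at least three. This case lies just outside the known Wi\'{s}niewski-type threshold (for $n=4$ the bound $i\geq (n+3)/2$ only secures $i\geq 4$), so I would instead try to show that no such manifold can carry a metric with $Ric^{\perp}>0$, combining the rational connectedness forced by $Ric^{\perp}>0$ with the intersection-theoretic restrictions of Theorem \ref{thm:16} and the Lefschetz-type surjectivity $\iota_*\colon\pi_1(Y)\to\pi_1(M)$; alternatively one hopes the classification of Fano fourfolds of Picard number one simply contains no index-one example of pseudo index three.

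For the ``if'' direction, ${\mathbb P}^4$ is covered by the K\"ahler C-space theorem (Theorem 1.1), ${\mathbb Q}^4$ by the Hermitian symmetric corollary (Corollary 1.2), and ${\mathbb P}^2\times{\mathbb P}^2$ by the product construction, since each factor is ${\mathbb P}^2\in{\mathcal M}_2^{\perp}$ with $Ric>0$. The genuinely hard part is the five del Pezzo fourfolds $X^4_6,\,X^4_4,\,X^4_3,\,X^4_{2,2},\,Y^4$. For a K\"ahler--Einstein metric normalized to $Ric=\omega$ one reads off directly from the definition that $Ric^{\perp}_{X\overline{X}}=|X|^2-R(X,\overline{X},X,\overline{X})$, so on unit vectors $Ric^{\perp}>0$ is equivalent to the pointwise pinching $H<1$ of the holomorphic sectional curvature (here the average of $H$ equals $\tfrac{2}{5}$, leaving room but no automatic bound). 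I expect this pointwise pinching to be the decisive difficulty, since $K$-stability delivers the K\"ahler--Einstein metric but says nothing about the sign of $H-1$: for $Y^4$ and $X^4_{2,2}$ one might exploit their large automorphism groups and their ambient Hermitian symmetric geometry (a linear section of ${\mathbb Gr}(2,5)$, an intersection of two quadrics) to control $H$, while for the remaining hypersurface cases a continuity argument anchored at a more symmetric degeneration, together with the openness of $Ric^{\perp}>0$ under deformation noted in the introduction, appears the most promising route.
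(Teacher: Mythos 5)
The statement you set out to prove is left in the paper as a \emph{conjecture}: the paper's own contribution ends exactly at the reduction you start from (Theorem \ref{thm:19}), and the authors explicitly decline to prove either remaining implication, so there is no paper proof to compare against. Your skeleton for the ``only if'' direction is correct as far as it goes: $i(M)=r\cdot\min_C(A\cdot C)$ gives $r\mid i$, the Cho--Miyaoka--Shepherd-Barron bound gives $i\leq 5$, the cases $i=4,5$ follow from the numerical characterization of projective space and the quadric (the paper itself invokes \cite{DH} for this in dimension three), and for $i=3$, $r=3$ Fujita's classification \cite{Fujita} produces exactly the five del Pezzo fourfolds with $b_2=1$. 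But the case $r=1$, $i=3$ is a genuine gap, not a loose end. Fano fourfolds with $\rho=1$ and index $1$ are not classified (unlike threefolds, where the paper can lean on Iskovskikh \cite{Iskov}), so your fallback hope that ``the classification simply contains no index-one example of pseudo index three'' appeals to a classification that does not exist. The alternative you sketch --- combining rational connectedness with Theorem \ref{thm:16} and Corollary \ref{coro-41} --- is not an argument: you would need a mechanism by which index $1$ together with pseudo index $3$ forces two disjoint divisors, a fibration over a curve, or a rational curve of anticanonical degree $\leq 2$, and you give none. This is precisely the point at which the authors stop proving and start conjecturing.

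The existence direction has a larger gap. Your reformulation is right: for a K\"ahler--Einstein metric with $Ric=\mu g$ one has $Ric^{\perp}_{X\overline{X}}=\big(\mu-H(X)\big)|X|^2$, so the issue is the pointwise pinching $H<\mu$, and your average value $2\mu/(n+1)=2/5$ is correct but, as you concede, yields nothing pointwise. What is missing is any mechanism to verify this pinching on even one of the five del Pezzo fourfolds. None of them is homogeneous, so the method of Section 2 (Itoh's eigenvalue bound $\nu$ for the curvature operator on K\"ahler C-spaces, combined with the known Einstein constant $\mu$) is unavailable; K\"ahler--Einstein existence itself must be established member by member of each family; and the deformation idea is structurally insufficient, because the conjecture's items $X^4_3$, $X^4_{2,2}$, etc.\ denote entire deformation families (\emph{any} smooth cubic fourfold, and so on), whereas openness of the condition $Ric^{\perp}>0$ --- the only deformation statement the paper records --- gives the property only on an open neighborhood in moduli of a member already known to have it. Covering a whole family this way would additionally require a closedness statement, which is unknown and would itself demand new a priori estimates; moreover a ``symmetric degeneration'' is typically a singular limit, from which positivity does not propagate outward at all. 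In short, your proposal correctly isolates the two open problems --- the $(r,i)=(1,3)$ exclusion and the del Pezzo existence --- but closes neither; it is a reasonable research program whose unresolved steps coincide exactly with why the paper states this as Conjecture rather than Theorem.
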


In other words, we conjecture that for $n\leq 4$, the set of all compact K\"ahler $n$-manifolds with $Ric^\perp>0$ coincide with the set of all Fano $n$-folds with index $r\geq 3$.

For $n\geq 5$, the set ${\mathcal M}^{\perp}_n$ contains more examples, and the index could certainly be $1$, e.g., ${\mathbb P}^2\times {\mathbb P}^3$, or ${\mathbb P}({\mathcal O}^{\oplus 3}\oplus {\mathcal O}(1))$ over ${\mathbb P}^3$. We believe that all manifolds in ${\mathcal M}^{\perp}_n$ should be rationally connected. We do not know whether or not they should all be Fano, even though all examples constructed so far are Fano. One could even ask if all such manifolds admit K\"ahler-Einstein metrics. In any event, for $n\geq 5$, ${\mathcal M}^{\perp}_n$ should form a very interesting class of algebraic manifolds, which perhaps worths some attention from both differential geometers and algebraic geometers.

\vspace{0.5cm}

\section{K\"ahler C-spaces}

First let us recall the well known fact about K\"ahler C-spaces, they are exactly the orbit spaces of the adjoint representation of compact simple Lie groups. We will follow the discussion of \cite{Itoh} or \cite{ChauTam} and references therein. All K\"ahler C-spaces with $b_2=1$ can be described as follows. Let ${\mathfrak g}$ be a simple complex Lie algebra. They are fully classified as the four classical sequences $A_r={\mathfrak s}{\mathfrak l}_{\,r+1}$ ($r\geq 1$), $B_r={\mathfrak s}{\mathfrak o}_{2r+1}$ ($r\geq 2$), $C_r={\mathfrak s}{\mathfrak p}_{2r}$ ($r\geq 3$), $D_r={\mathfrak s}{\mathfrak o}_{2r}$ ($(r\geq 4$) and the exceptional ones $E_6$, $E_7$, $E_8$, $F_4$ and $G_2$.

Let ${\mathfrak h}\subset {\mathfrak g}$ be its Cartan subalgebra with corresponding root system $\Delta \subset {\mathfrak h}^{\ast}$, so we have ${\mathfrak g} = {\mathfrak h} \oplus \bigoplus_{\alpha \in \Delta} {\mathbb C}E_{\alpha }$ where $E_{\alpha}$ is a root vector of $\alpha$. Let $r=\dim_{\mathbb C} {\mathfrak h}$ and fix a fundamental root system $\{ \alpha_1, \ldots , \alpha_r\}$. This gives an ordering in $\Delta$, and let $\Delta^+$, $\Delta^-$ be the set of positive or negative roots. Fix an integer $i$ with $1\leq i\leq r$. For any positive integer $k$, denote by
$$ \Delta_i^+\!(k)= \{ \alpha = \sum_{j=1}^r n_j\alpha_j \in \Delta^+ \mid n_i=k\} $$
and write $\Delta^+_i=\bigcup_{k>0} \Delta^+_i\!(k)$. Let $G$ be the simply connected simple complex Lie group with Lie algebra ${\mathfrak g}$, and $P\subset G$ the parabolic subgroup whose Lie algebra is ${\mathfrak h} \oplus \bigoplus_{\alpha \,\in \Delta \setminus \Delta_i^+} {\mathbb C}E_{\alpha }$. Then $M=G/P$ is a K\"ahler C-space with $b_2=1$. Conversely, any K\"ahler C-space with $b_2=1$ are obtained this way. Following \cite{Itoh}, we will denote this K\"ahler C-space by $({\mathfrak g}, \alpha_i)$.

Let $\{ E_{\alpha}\}_{\alpha \in \Delta} \cup \{ H_{\alpha_j}\}_{j=1}^r$ be a {\em Weyl canonical basis}  of ${\mathfrak g}$ (see \cite{ChauTam} and the references therein), and  write ${\mathfrak m}^{\!+\!k} = \bigoplus_{\alpha \in \Delta_i^{\!+\!}(\!k)} {\mathbb C} E_{\alpha}$, ${\mathfrak m}^{\!-\!k} = \bigoplus_{\alpha \in \Delta_i^{\!+\!}(\!k) }{\mathbb C}E_{-\alpha}$. Then ${\mathfrak m}^{\!+} = \bigoplus_{k>0}{\mathfrak m}^{\!+\!k}$ is the holomorphic tangent space of $M$ at the base point, and the  metric $g$ on $({\mathfrak g}, \alpha_i)$ given by
$$ g =  \sum_{k>0} (-kB)|_{{\mathfrak m}^{\!+\!k} \times {\mathfrak m}^{\!-\!k}} $$
is the unique (up to constant multiple) K\"ahler-Einstein metric on $M$. Here $B$ is the Killing form on ${\mathfrak g}$. Let $e_{\alpha } = \frac{1} {\sqrt{k}} E_{\alpha} $ for $\alpha \in \Delta^{\!+\!}_i(k)$, then $\{  e_{\alpha } \}_{\alpha \in \Delta^{\!+\!}_i} $ forms a unitary (left invariant) frame on $M$, called the {\em Weyl frame}.

Note that K\"ahler C-spaces with $b_2=1$ include all the irreducible compact Hermitian symmetric spaces:
\begin{eqnarray*}
&& (A_r, \alpha_i) =  Gr_{\mathbb C}(i,r\!+\!1),   \ \ \  \ 1\leq i\leq r, \ r\geq 1\,;\\
&& (B_r, \alpha_1) =  {\mathbb Q}^{2r-1} , \ \  \ \  r\geq 2 \,;   \\
&& (B_r, \alpha_r) = \left\{ \begin{array}{ll} {\mathbb P}^3, \ \  \ \ \ \ \mbox{if} \ r=2\,; \\ I\!I_{r+1},  \ \ \mbox{if} \  r\geq 3\,; \end{array} \right. \\
&& (C_r,\alpha_1) = {\mathbb P}^{2r-1} , \ \ \ \ r\geq 3;   \\
&& (C_r, \alpha_r) = I\!I\!I_r, \ \ \ \ r\geq 3\,; \\
&&  (D_r, \alpha_1) = I\!I_{r+1}, \ \ \ r\geq 4;    \\
&&  (D_r, \alpha_{r\!-\!1}) = (D_r, \alpha_r) = I\!I_r, \ \ \ \ r\geq 4\,;\\
&& (E_6, \alpha_1) = (E_6,\alpha_6) = M_{\! \mbox{\it \tiny{V}}   }^{16}\,;    \\
&& (E_7, \alpha_7) = M_{\! \mbox{\it \tiny{V\!I}}   }^{27}\,;\\
&& (G_2, \alpha_1) = {\mathbb Q}^5\,,
\end{eqnarray*}
where $I\!I_n =SO(2n)/U(n)$ is the space of orthogonal complex structures on ${\mathbb R}^{2n}$ and $I\!I\!I_n=Sp(n)/U(n)$ is the space of complex structures on ${\mathbb H}^n$ compatible with the inner product, with ${\mathbb H}$ the quaternions. The former has complex dimension $\frac{1}{2}n(n-1)$ and rank $[\frac{n}{2}]$, and the latter has dimension $\frac{1}{2}n(n+1)$ and rank $n$.

The set of K\"ahler C-spaces with $b_2=1$ which are not Hermitian symmetric spaces consists of the classical sequences
$$ (B_r,\alpha_i)_{r\geq 3}, \ \ (C_r,\alpha_i)_{r\geq 3}, \  \ (D_r,\alpha_i)_{r\geq 4}, $$
where $1<i<r$ for $B_r$ and $C_r$ and $1<i<r-1$ for $D_r$, and  the exceptional ones:
$$ (E_6,\alpha_i)_{2\leq i\leq 5}, \ \ (E_7, \alpha_i)_{1\leq i\leq 6}, \ \ (E_8, \alpha_i)_{1\leq i\leq 8}, \ \ (F_4, \alpha_i)_{1\leq i\leq 4}, \ \ (G_2, \alpha_2).$$

For a simply connected irreducible compact K\"ahler manifold $(M^n,g)$, if the bisectional curvature (or orthogonal bisectional curvature) is non-negative, then by the work of Mok \cite{Mok} (or the work of Gu and Zhang \cite{GuZhang}), either $M^n$ is biholomorphic to ${\mathbb P}^n$ or $(M^n,g)$ is holomorphically isometric to a compact Hermitian symmetric space of rank at least $2$. In \cite{WuYauZheng}, a weaker curvature condition was considered: a K\"ahler manifold $(M^n,g)$ is said to have  nonnegative {\em quadratic bisectional curvature,} denoted by $QB\geq 0$, if at any $x\in M$, for any unitary frame $\{e_1, \ldots , e_n\}$ at $x$, and for any real constants $\{ a_1, \ldots , a_n\} $, it holds that
$$ \sum_{i,j=1}^n R_{i\overline{i} j\overline{j}} (a_i-a_j)^2 \geq 0 .$$
$M$ is said to have positive quadratic bisectional curvature, denoted by $QB>0$, if the above quantity is positive whenever these $a_i$ are not all equal.  The quantity appeared first in \cite{BishopGoldberg} from the Bochner formula in computing the Laplacian of the length square of a $(1, 1)$ form.

It was hoped then that the condition $QB\geq 0$ would be satisfied by all K\"ahler C-spaces with $b_2=1$. In \cite{LiWuZheng}, this was verified for the $7$-dimensional space $(B_3, \alpha_2)$, using the computation of \cite{Itoh}. However, it turns out that the condition $QB\geq 0$ was only satisfied by about 80\% of K\"ahler C-spaces with $b_2=1$, namely, in \cite{ChauTam}, Chau and Tam completely computed $QB$ for all K\"ahler C-spaces with $b_2=1$ excluding the Hermitian symmetric ones, and their conclusions are the following:

\begin{theorem}[Chau-Tam]
For $1<i<r$ and $r\geq 3$, $(B_r, \alpha_i)$ has $QB>0$ ($\geq 0$) if and only if $5i+1< 4r$ ($\leq 4r$). \\
For $1<i<r$ and $r\geq 3$, $(C_r, \alpha_i)$ has $QB>0$ ($\geq 0$) if and only if $5i-3< 4r$ ($\leq 4r$). \\
For $1<i<r\!-\!1$ and $r\geq 4$, $(D_r, \alpha_i)$ has $QB>0$ ($\geq 0$) if and only if $5i+3< 4r$ ($\leq 4r$). \\
For the exceptional ones, the following satisfy $QB>0$:
$$(G_2, \alpha_2), \ \ (F_4, \alpha_i)_{i=1,2,4}, \ \ (E_6,\alpha_i)_{i=2,3,5},  \ \ (E_7,\alpha_i)_{i=1,2,5}, \ \ (E_8,\alpha_i)_{i=1,2,8}.$$
For the remaining ones,  each of them does not satisfy $QB\geq 0$:
$$ {\mathcal E}_0 = \{ (F_4, \alpha_3), \ (E_6, \alpha_4), \ (E_7,\alpha_i)_{i=3,4,6}, \ (E_8,\alpha_i)_{i=3,4,5,6,7}\}. $$
\end{theorem}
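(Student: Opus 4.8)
The plan is to convert the pointwise inequality defining $QB$ into a single eigenvalue comparison for the curvature operator, and then to extract the thresholds from Itoh's explicit curvature data. Fix the base point and the Weyl frame $\{e_\alpha\}_{\alpha\in\Delta_i^+}$, and let $R=(R_{\alpha\overline{\alpha}\beta\overline{\beta}})$ be the symmetric matrix of holomorphic bisectional curvatures indexed by $\Delta_i^+$. Since only the differences $a_\alpha-a_\beta$ occur, the diagonal terms drop out and a direct expansion gives
\begin{equation*}
\sum_{\alpha,\beta\in\Delta_i^+} R_{\alpha\overline{\alpha}\beta\overline{\beta}}\,(a_\alpha-a_\beta)^2 \;=\; 2\,\mathbf{a}^{T}\,(cI-R)\,\mathbf{a},
\end{equation*}
where $c$ is the common value of $\Ric_{\alpha\overline{\alpha}}=\sum_{\beta} R_{\alpha\overline{\alpha}\beta\overline{\beta}}$, constant because the metric is K\"ahler--Einstein. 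Thus every row sum of $R$ equals $c$, so $R\mathbf{1}=c\mathbf{1}$, the vector $\mathbf{1}$ is an eigenvector, and $cI-R$ preserves $\mathbf{1}^{\perp}$. Consequently $QB\geq 0$ (resp.\ $QB>0$) holds if and only if $c\geq\lambda$ (resp.\ $c>\lambda$), where $\lambda=\lambda_{\max}\big(R|_{\mathbf{1}^{\perp}}\big)$ is the largest eigenvalue of the bisectional curvature matrix other than the Einstein eigenvalue $c$. The whole theorem is therefore a statement about when $\lambda$ crosses $c$.

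To compute $\lambda$ I would insert Itoh's explicit formula for $R_{\alpha\overline{\alpha}\beta\overline{\beta}}$ in the Weyl frame, which expresses each entry through the structure constants $N_{\alpha\beta}$ (with $[E_\alpha,E_\beta]=N_{\alpha\beta}E_{\alpha+\beta}$), the Killing inner products of roots, and the grading level $k$ with $\alpha\in\Delta_i^+(k)$. The resulting matrix is highly patterned: an entry depends only on the combinatorial relation between $\alpha$ and $\beta$ (whether $\alpha\pm\beta$ is a root, and on which levels $\alpha$, $\beta$ sit), and $R$ is invariant under the residual isotropy symmetry of $M$. The strategy is to use this symmetry to reduce the large eigenvalue problem: restricting $cI-R$ to the isotropy-invariant vectors, which for $b_2=1$ are essentially those constant on each level $\Delta_i^+(k)$, collapses the problem to a small quadratic form whose size is the number of levels, and one argues that the extremal eigenvalue is realized there.

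With this in place I would run a two-sided analysis type by type. For necessity (the ``only if''), I would exhibit explicit test vectors $\mathbf{a}\in\mathbf{1}^{\perp}$ (level-constant, or supported on a distinguished subset of roots) for which $\mathbf{a}^{T}(cI-R)\mathbf{a}<0$ whenever the stated inequality fails; for sufficiency, a positivity estimate on $cI-R$ (a sum-of-squares rewriting, or a Gershgorin/diagonal-dominance bound exploiting the patterned entries) under the inequality. Using the standard coordinate realizations of the roots of $B_r$, $C_r$, $D_r$ (as $\pm\eps_p\pm\eps_q$, $\pm\eps_p$, $\pm 2\eps_p$) to enumerate $\Delta_i^+(k)$ and evaluate the structure constants, $\lambda$ comes out as an explicit function of $r$ and $i$, and comparing with $c$ produces exactly the linear inequalities $5i+1<4r$, $5i-3<4r$, $5i+3<4r$ and their non-strict analogues. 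The exceptional types admit no uniform coordinate description, so each of $G_2,F_4,E_6,E_7,E_8$ must be handled by a direct finite computation with its root data, deciding membership in the $QB>0$ list or in ${\mathcal E}_0$.

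The hard part will be the eigenvalue computation, namely correctly identifying the extremal direction of $R$ on $\mathbf{1}^{\perp}$. Two points need care. First, one must justify that the maximizing vector may be taken isotropy-invariant (hence level-constant), so that the symmetry reduction genuinely captures $\lambda$ rather than only a bound from one invariant subspace; this requires checking that the non-invariant isotropy components of $R$ contribute smaller eigenvalues, which is really the crux and must be matched against the test-vector computations so that necessity and sufficiency meet precisely at the threshold. Second, the arithmetic bookkeeping of structure constants and root multiplicities within each $\Delta_i^+(k)$ is delicate, since a miscount of the roots contributing to a single entry of $R$ shifts the crossover by one unit. The exceptional cases are the most laborious, as they lack the regular pattern of the classical series and force a separate computation for each simple group; this is presumably why the set ${\mathcal E}_0$ has no clean closed-form description.
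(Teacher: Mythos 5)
A point of context first: the paper you were given does not prove this statement at all --- it is quoted as Theorem 2.1 directly from Chau--Tam \cite{ChauTam}, where the proof occupies a large part of that paper. So your proposal has to stand on its own, and it has a genuine gap at its very first reduction. The condition $QB\geq 0$ quantifies over \emph{all} unitary frames at the point, not just the Weyl frame. Equivalently, since every Hermitian symmetric tensor is unitarily diagonalizable with real eigenvalues, $QB\geq 0$ at $x$ is the statement that $\langle \mathcal{R}(A),A\rangle \leq c\,|A|^2$ for every trace-free Hermitian tensor $A$ on $T^{1,0}_xM$, where $\mathcal{R}$ is the curvature operator $(\mathcal{R}(A))_{i\bar j}=\sum_{k,\ell}R_{i\bar j k\bar \ell}A_{\ell\bar k}$ acting on the $(n^2-1)$-dimensional space of such tensors. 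Your reduction replaces this by the eigenvalue problem for the $n\times n$ matrix $(R_{\alpha\bar\alpha\beta\bar\beta})$ on $\mathbf{1}^{\perp}$, i.e.\ you test only those Hermitian tensors that happen to be diagonal in the Weyl frame. That yields the ``only if'' direction (a necessary condition), but not the ``if'': positivity of $cI-R$ on $\mathbf{1}^{\perp}$ does not imply $QB\geq 0$, because rotating the frame can make the quadratic form negative. In the language of the symmetry you invoke, the Weyl frame diagonalizes the maximal torus $T$ inside the isotropy group, so the diagonal tensors are exactly the zero-weight subspace of the $T$-action on Hermitian tensors; the curvature operator commutes with $T$ and preserves that subspace, so your $\lambda$ is the top eigenvalue of \emph{one invariant block only}. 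The full operator has nonzero-weight blocks spanned by $e_\alpha\otimes\bar e_\beta$ with $\alpha\neq\beta$, whose entries are the components $R_{\alpha\bar\beta\gamma\bar\delta}$ with $\{\alpha,\gamma\}\neq\{\beta,\delta\}$ --- precisely the terms carrying the structure constants $N_{\alpha\beta}$ --- and nothing forces the extremal eigenvector to lie in the zero-weight block. The ``crux'' you flag (passing from diagonal vectors to level-constant ones) lives entirely inside the diagonal block and cannot repair this.

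The consequence is not cosmetic: the thresholds $5i+1<4r$, $5i-3<4r$, $5i+3<4r$ and the exceptional set $\mathcal{E}_0$ are determined by the largest eigenvalue of the full operator, and your restricted eigenvalue is in general strictly smaller, so your method would certify $QB\geq 0$ in cases where it actually fails (it is exactly the off-diagonal, structure-constant blocks that make spaces such as $(F_4,\alpha_3)$ or $(E_8,\alpha_5)$ fail $QB\geq 0$ despite looking fine on natural diagonal test tensors). To repair the argument you would need either to prove that for these spaces the maximal eigenvalue is attained in the zero-weight block --- which is what is in doubt, precisely in the hard cases --- or to set up the eigenvalue problem on the full space of trace-free Hermitian tensors from the start, organized by the isotropy/root-space decomposition; the latter is essentially what Chau and Tam actually do, and it is why their computation is long. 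Your Weyl-frame test vectors remain perfectly legitimate for the negative (non-existence) half of the statement, but the positive half is not reachable by the proposal as written.
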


Clearly, if we take all but one of those $a_i$ to be zero in the definition of $QB$, we see that the condition $QB>0$ (or $\geq 0$) implies $Ric^{\perp}>0$ (or $\geq 0$). So by the Theorem of Chau and Tam, we know that at least 80\% of K\"ahler C-spaces with $b_2=1$ will satisfy $Ric^{\perp}>0$. It is probably true that all of them except ${\mathbb P}^1$ satisfy $Ric^{\perp}>0$, which involves the verification of $H< \mu$, where $H$ is the holomorphic sectional curvature of any tangent direction, and $\mu$ is the (constant) Ricci curvature of $M$.

We will take advantage of the calculations of \cite{ChauTam} and \cite{Itoh} to conclude Theorem 1.1 and Corollary 1.2, namely, all K\"ahler C-spaces with $b_2=1$ except ${\mathbb P}^1$ or those in ${\mathcal E}_0$ satisfy $Ric^{\perp}>0$, and we believe that those in ${\mathcal E}_0$ will also satisfy $Ric^{\perp}>0$ but we skip its verification to avoid digression from our main line of discussions.

Let us start with the verification of $Ric^{\perp}>0$, or equivalently $H<\mu$, for any irreducible compact Hermitian symmetric spaces $M^n$ other than ${\mathbb P}^1$. First for ${\mathbb P}^n$ with $n\geq 2$, in this case $H$ is constantly  $2$, and $\mu = n+1$, so $H<\mu$ holds. For the quadric hypersurface ${\mathbb Q}^n$, with $n\geq 3$ (note that ${\mathbb Q}^2={\mathbb P}^1\times {\mathbb P}^1$ is reducible), it can be holomorphically and isometrically embedded in ${\mathbb P}^{n+1}$, so its maximum $H$ is again no greater than $2$, while its $\mu$ is $n+2-2=n$, so again we have $H<\mu$. For the complex Grassmann manifold $M^n= Gr_{\mathbb C}(i,r+1)=(A_r, \alpha_i)$, where $1\leq i\leq r$, we have $n=i(r+1-i)$ and $r\geq 2$ (otherwise $M={\mathbb P}^1$). As is well known, $M^n$ can be holomorphically and isometrically embedded in ${\mathbb P}^N= {\mathbb P} (\bigwedge^{\!i} \!{\mathbb C}^{r\!+\!1})$, and the Ricci curvature of $M^n$ is $\mu = r+1$, while the maximum $H$ of $M^n$ is no greater than that of ${\mathbb P}^N$ which is $2$, so $H\leq 2< \mu$, and $M^n$ satisfies $Ric^{\perp}>0$.

For type $I\!I$ and type $I\!I\!I$ Hermitian symmetric spaces, which are $M^n_{\mbox{\it \tiny{I\!I}}}= (B_r, \alpha_r)$ and $M^n_{\mbox{\it \tiny{I\!I\!I}}}= (C_r, \alpha_r)$, respectively, where $r\geq 3$ and $n=\frac{1}{2}r(r+1)$, we will postpone the verification of $Ric^{\perp}>0$ and do it with the other classical K\"ahler C-spaces with $b_2=1$.

For the two exceptional Hermitian symmetric spaces, $M^{16}_{\mbox{\it \tiny{V}}}= (E_6, \alpha_1)$ and $M^{27}_{\mbox{\it \tiny{V\!I}}}= (E_7, \alpha_7)$, again by the computation in the $E_6$ and $E_7$ subsections of \cite{ChauTam}, we see that $\Delta^{\!+\!}_1(k)=\phi$ for any $k\geq 2$ and $\mu=12$  in the former case, while  $\Delta^{\!+\!}_7(k)=\phi$ for any $k\geq 2$  and $\mu=18$ in the latter case. On the other hand, let us recall the so-called {\em curvature operator} $Q$ defined in \cite{Itoh} (note that this is not the curvature operator in Riemannian or K\"ahler geometry). Consider the symmetric product space $S^2T^{1,0}_M$ equipped with the Hermitian inner product
$$ (X\!\cdot \!Y, \overline{Z\!\cdot \!W}) = \frac{1}{2}\big( \langle X,\overline{Z}\rangle  \langle Y,\overline{W}\rangle +   \langle X,\overline{W}\rangle  \langle Y,\overline{Z}\rangle \big) ,$$
where $X\!\cdot Y= \frac{1}{2}(X\otimes Y+Y\otimes X)$. Now consider  $Q: S^2T^{1,0}_M \rightarrow S^2T^{1,0}_M$, the self-adjoint linear operator  defined by
$$ ( Q(X\!\cdot \!Y), \, \overline{Z\!\cdot \!W}) = R_{X\overline{Z}Y\overline{W}}. $$
Denote by $\nu$ the largest eigenvalue of $Q$. Since  $R_{X\overline{X}X\overline{X}} = (Q(X\!\cdot \!X), \, \overline{X\!\cdot \!X})$, the maximum of holomorphic sectional curvature $H$ is no greater than $\nu$. The number $\nu$ was computed in \cite{Itoh} for all classical and some exceptional K\"ahler C-spaces with $b_2=1$. In other words, we always have $H\leq \nu$, so if $\nu < \mu$, then $Ric^{\perp}>0$.

By Table 12 in \cite{Itoh}, we know that for $(E_6, \alpha_1)=(E_6, \alpha_6)$ or $(E_7, \alpha_7)$, $\nu=2$, while $\mu = 12$ or $18$, so  $Ric^{\perp}>0$ for the two exceptional Hermitian symmetric spaces.

For $(B_r, \alpha_i)$ where $r\geq 3$ and $1\leq i\leq r$, by the computation in \S 3.1 in the paragraph right before Lemma 3.2 in \cite{ChauTam}, we see that $\mu = 2r-i \geq r \geq 3$, while by Table 3-5 of \cite{Itoh}, we see that $\nu =2$ or $1$, hence $ \nu < \mu$ thus $Ric^{\perp}>0$.

For $(D_r, \alpha_i)$ where $r\geq 4$ and $1\leq i\leq r$,  \S 3.2 of \cite{ChauTam} says that $\mu =2r-i-1\geq r-1\geq 3$, while Table 9-11 of \cite{Itoh} says that $\nu =2$, hence $\nu <\mu$ and $Ric^{\perp}>0$.

For $(C_r, \alpha_i)$ where $r\geq 3$ and $1\leq i\leq r$, \S 3.3 (the paragraph right before Lemma 3.5) of \cite{ChauTam} says that $\mu =2r-i+1\geq r+1\geq 4$, while Table 6-8 of \cite{Itoh} says that $\nu =2$, or $4$ when $i=r$, hence $\nu <\mu$ and $Ric^{\perp}>0$ when $i<r$ or when $r\geq 4$. In the case $r=3$ and $i=3$, namely for $M^6_{\mbox {\tiny{I\!I\!I} } } =(C_3, \alpha_3)$,  we have $\mu=4$ and $\nu=4$, so in order to conclude $Ric^{\perp}>0$ we need to show that the maximum of holomorphic sectional curvature of $(C_3, \alpha_3)$ is strictly less than $4$. To see that, we will just carry out the computation in its non-compact dual, namely, the bounded symmetric domain $D^{\mbox {\tiny{I\!I\!I} } }_3$ which is the set of all $3\times 3$ complex symmetric matrices $Z$ satisfy $I_3- \overline{Z}Z > 0$, as the curvature tensor just differs by a sign.

Recall that for type $I$ bounded symmetric domain $D^{\mbox {\tiny{I} } }_{\!p,q}$ which consists of all complex $p\times q$ matrices $Z$ such that $I_q- Z^{\ast }Z >0$. Let $\Phi (Z) = \log \det (I_q - Z^{\ast }Z)$. Then $\omega_g = \sqrt{-1} \partial \overline{\partial } \Phi (Z)$ is the standard metric on $D^{\mbox {\tiny{I} } }_{\!p,q}$. Write $Z=(z_{i\alpha })$, then at the origin $0$, the matrix of the metric is the identity matrix: $g_{i\alpha \, \overline{j\beta}}= \delta_{ij} \delta_{\alpha \beta}$,  and the curvature tensor is given by
$$ R_{i\alpha \overline{j\beta} k\gamma \overline{\ell \delta }} = - \delta_{ij}\delta_{k\ell} \delta_{\alpha \delta } \delta_{\gamma \beta} - \delta_{i\ell }\delta_{kj} \delta_{\alpha \beta } \delta_{\gamma \delta} .$$
So for tangent vectors $X=\sum_{i, \alpha } X_{i\alpha } \frac{\partial }{ \partial z_{i\alpha }}$ and $Y=\sum_{i, \alpha } Y_{i\alpha } \frac{\partial }{ \partial z_{i\alpha }}$, we have
$$ - R_{X\overline{X}Y\overline{Y}} = \sum_{i, \alpha , \beta} X_{i\alpha }\overline{Y}_{k\alpha } \,\overline{X}_{i\beta } Y_{k\beta } +\sum_{i, \alpha , \gamma} X_{i\alpha } \overline{Y}_{i\gamma  }\, \overline{X}_{k\alpha  } Y_{k\gamma }  = \rho (XY^{\ast }) + \rho (\, ^t\!X \overline{Y}),$$
where $\rho (A) = \sum_{i,j} |A_{ij}|^2$ for any matrix $A=(A_{ij})$. Since
$$ \rho (AA^{\ast }) = \sum_{i,j} |\sum_k A_{ik} \overline{A_{jk}}|^2 \leq \sum_{i,j} \rho_i(A)\rho_j(A) = \big(\sum_i \rho_i(A)\big)^2 = \rho(A))^2, $$
where $\rho_i(A) = \sum_k |A_{ik}|^2$, and similarly, $\rho (\, ^t\!A\overline{A}) \leq (\rho(A))^2$, we obtain that for any tangent vector $X\neq 0$ at the origin,  the holomorphic sectional curvature in the direction of $X$ satisfies:
$$ -H(X) = -R_{X\overline{X}X\overline{X}}/ |X|^4 = (\rho (XX^{\ast})+\rho (\, ^t\!X \overline{ X}))/ |X|^4 \leq 2 \rho(A))^2 / |X|^4 = 2.$$
Now the type $I\!I\!I$ bounded symmetric domain $D^{\mbox {\tiny{I\!I\!I} } }_r$ is a totally geodesic subspace in $D^{\mbox {\tiny{I} } }_{r,r}$, so its holomorphic sectional curvature in any tangent direction is greater than or equal to $-2$. While for $r=3$, it is easy to see from the above bisectional curvature formula that the Ricci curvature of $D^{\mbox {\tiny{I\!I\!I} } }_3$ is $-4$. So its compact dual $(C_3, \alpha_3)$ satisfies $\mu =4$ and $H\leq 2$, thus $Ric^{\perp}>0$. This completes the proof of Theorem 1.1.

 For Corollary 1.2, since a product metric will have $Ric^{\perp} >0$ if both of its factors are so and with nonnegative Ricci, we know a compact Hermitian symmetric space will have $Ric^{\perp}>0$ if it does not contain ${\mathbb P}^1$ as a factor. On the other hand, by Corollary 1.7, any ${\mathbb P}^1\times N$ cannot admit any K\"ahler metric with $Ric^{\perp}>0$. So Corollary 1.2 holds. We should remark that the main part of the computations here were done by \cite{Itoh} and \cite{ChauTam}, which led us to conclude that the condition $Ric^{\perp}>0$ is satisfied by all K\"ahler C-spaces with $b_2=1$ and $n\geq 2$ except the ones in ${\mathcal E}_0$:
$$ {\mathcal E}_0 = \{ (F_4, \alpha_3), \ (E_6, \alpha_4), \ (E_7,\alpha_i)_{i=3,4,6}, \ (E_8,\alpha_i)_{i=3,4,5,6,7}\} .$$
We believe that each space in ${\mathcal E}_0$ also satisfies $Ric^{\perp}>0$, but we can not claim that since we did not go through the lengthy computation here.

\vspace{0.5cm}

\section{Projectivized bundles}

In this section, we will consider projectivized bundles that admit $Ric^{\perp}>0$ metrics. Let $(M^n,g)$ be a compact K\"ahler manifold and $(E,h)$ be a holomorphic vector bundle over $M$ equipped with a Hermitian metric. Let $\pi : P={\mathbb P}(E^{\ast}) \rightarrow M$ be the projectivized bundle associated with $E$, namely, for any $x\in M$, the fiber $\pi^{-1}(x)={\mathbb P}(E_x)$ is the projective space of all complex lines in $E_x$ through the origin.

Let $L$ be the line bundle on $P$ dual to the tautological subbundle, determined by the short exact sequence
$$ 0 \rightarrow {\mathcal O}_P  \rightarrow \pi^{\ast}E^{\ast } \otimes L \rightarrow T_{P|M} \rightarrow 0 ,$$
where $T_{P|M} = \mbox{ker} (d\pi : T_P \rightarrow \pi^{\ast }T_M)$ is the relative tangent bundle. As is well known, the metric $h$ induces naturally a Hermitian metric $\hat{h}$ on $L$, whose curvature form is
\begin{equation}
C_1(L,\hat{h}) = \omega_{\mbox{\tiny{FS}}} - \frac{\sqrt{\!-\!1}}{|v|^2} \Theta^h_{v\overline{v}}
\end{equation}
at any point $(x,[v])\in P$, where $x\in M$ and $0\neq v\in E_x$. Here $\omega_{\mbox{\tiny{FS}}}$ is the K\"ahler form of the Fubini-Study metric on the fiber of $\pi$. Consider the closed $(1,1)$ form on $P$:
\begin{equation}
\omega_{G} = \lambda \pi^{\ast } \omega_g + C_1(L, \hat{h}),
\end{equation}
where $\lambda >0$ is a constant. Clearly, for $\lambda$ sufficiently large, $G$ is a K\"ahler metric on $P$.

Historically, the metric $G$ was used in \cite{Yau74} (Proposition 1) to show that, for any compact K\"ahler manifold $M^n$ and any holomorphic vector bundle $E$ of rank at least $2$ on $M$, the metric $G$ on the projectivized bundle $P$ has positive scalar curvature when $\lambda $ is sufficiently large. At about the same time, in \cite{Hitchin}  it was shown that, when $E={\mathcal O}\oplus {\mathcal O}(-k)$ on ${\mathbb P}^1$  where $k\geq 0$, so $P={\mathbb F}_k$ is the Hirzebruch surface, the metric $G$ has positive holomorphic sectional curvature when $\lambda $ is sufficiently large. In \cite{AHZ}, this later construction of  Hitchin was generalized to conclude that, when $(M,g)$ has positive holomorphic sectional curvature and $E$ is any Hermitian vector bundle over $M$, then for sufficiently large $\lambda$, the K\"ahler metric  $G$ on $P={\mathbb P}(E^{\ast})$ always has positive holomorphic sectional curvature.

In the following, we will follow the notations in \cite{AHZ} to compute the $Ric^{\perp}$ of $G$. Fix a point $(x,[v])\in P$, where $x\in M$, $v\in E_x$, and $|v|=1$. Choose local holomorphic coordinates $z=(z_1, \ldots , z_n)$ near $x$ in $M^n$, so that $x$ corresponds to $z=0$ and $g_{i\overline{j}}(0)=\delta_{ij}$, $dg(0)=0$. Also, choose a local holomorphic frame $e=(e_1, \ldots e_r)$ for $E$ near $x$, such that $e_1(0)=v$, and $h_{\alpha \overline{\beta}}(0)=\delta_{\alpha \beta}$, $dh(0)=0$. We can further assume that at $0$ we have $\partial_i\partial_jh=0$ and  $R^h_{v\overline{v}i\overline{j}}=\delta_{ij}\xi_i$ for any $1\leq i,j\leq n$ . Consider the section $u=e_1(z) + \sum_{\alpha =2}^r t_{\alpha}e_{\alpha}$ of $E$. Then $(z,t) = (z_1, \ldots , z_n, t_2, \ldots , t_r)$ forms a local holomorphic coordinate in $P$ near $(x,[v])$. For the sake of convenience, we will write $t_1=1$, and abbreviate $\frac{\partial }{\partial z_i}$ as $i$, and $\frac{\partial }{\partial t_{\alpha }}$ as $\alpha$, etc.. We then have
$$ h_{u\overline{u}} = \sum_{\alpha , \,\beta =1}^r t_{\alpha} \overline{t_{\beta}} h_{\alpha \overline{\beta}} (z), \ \ \ \ \ \ \omega_G = \lambda \, \pi^{\ast}\omega_g + \sqrt{\!-\!1}\partial \overline{\partial } \log h_{u \overline{u}}. $$
As in \cite{AHZ}, by a straight forward computation, we get at the origin that
\begin{eqnarray*}
&& G_{i\overline{j}} = \delta_{ij} (\lambda - \xi_i), \ \ \ \ G_{i\overline{\beta}} =0, \ \ \ \  G_{\alpha \overline{\beta}} = \delta_{\alpha \beta}; \\
& & G_{\ast \overline{\beta},\ast} = G_{\alpha \overline{j},\beta} =0, \ \ \ \ G_{i\overline{j},\alpha } = - R^h_{\alpha \overline{v}i\overline{j}}, \ \ \ \ G_{i\overline{j},k } = h_{u\overline{u},i\overline{j}k}; \\
&& G_{i\overline{\beta},k \,\overline{\delta }} = G_{ \alpha \overline{\beta}, \gamma \overline{j}} =0, \ \ \ \ G_{i\overline{j},\alpha \overline{\beta }} = -R^h_{\alpha \overline{\beta} i\overline{j}} + \delta_{\alpha \beta} \delta_{ij} \xi_i\, ;\\
&& G_{i\overline{j},k \overline{\beta }} = h_{u\overline{\beta}, i\overline{j}k}, \ \ \ \ G_{\alpha\overline{\beta},\gamma \,\overline{\delta }} = - h_{\alpha \overline{\beta}} h_{\gamma \overline{\delta}} - h_{\alpha \overline{\delta}} h_{\gamma \overline{\beta}}; \\
&& G_{i\overline{j},k \overline{\ell } } = \lambda \,g_{i\overline{j},k \overline{\ell } } + h_{u\overline{u}, i\overline{j}k \overline{\ell } } - h_{u\overline{u},i \overline{j } } h_{u\overline{u},k \overline{\ell } } - h_{u\overline{u},i \overline{\ell } } h_{u\overline{u},k \overline{j } }.
\end{eqnarray*}
Now let  $X=y+\sigma = \sum_{i=1}^n y_i \frac{\partial }{\partial \,z_i} + \sum_{\alpha =2}^r \sigma_{\alpha}\frac{\partial }{\partial \,t_{\alpha} }$ be any non-zero tangent vector of type $(1,0)$ at $(x,[v])\in P$. Denote by $R$ the curvature tensor of $G$. At the origin, we have
$$      R_{a\overline{b}c\overline{d}}  = - G_{a\overline{b}, c\overline{d} } +  \sum_{j=1}^n  \frac{1} {\lambda -\xi_j }   G_{a\overline{j},c} \overline{ G_{b\overline{j},d} } $$
for any indices $a$, $b$, $c$, $d$ which could be any $i$ or $\alpha$. Write $\eps_j= \frac{1} {\lambda -\xi_j }$. We have
\begin{eqnarray*}
&& R_{y\overline{\sigma}y\overline{\sigma}} = R_{y \overline{\sigma}\sigma \overline{\sigma}} =  0, \ \ \ \ R_{\sigma \overline{\sigma}\sigma \overline{\sigma}} = 2|\sigma |^4, \\
&& R_{y\overline{y}y\overline{\sigma}} = -h_{u\overline{\sigma},\, y\overline{y}y} - \sum \eps_j h_{u\overline{u},\,y\overline{y}y}R^h_{v\overline{\sigma} j\overline{y}} , \\
&& R_{y\overline{y}\sigma\overline{\sigma}} = R^h_{\sigma\overline{\sigma}y\overline{y}} - |\sigma |^2\sum_j \xi_j|y_j|^2 + \sum_j \eps_j |R^h_{v\overline{\sigma} j\overline{y}}|^2 , \\
&& R_{y\overline{y}y\overline{y}} = \lambda R^g_{y\overline{y}y\overline{y}} - h_{u\overline{u}, \, y\overline{y}y\overline{y}} + 2( \sum_j \xi_j |y_j|^2)^2 + \sum_j \eps_j |h_{u\overline{u}, \, y\overline{y}j }  |^2.
\end{eqnarray*}
Similarly, the Ricci curvature of $G$ at the origin is given by
\begin{eqnarray*}
 R_{a\overline{b}} & =  & \sum_c \frac{1}{ G_{c\overline{c}} } R_{a\overline{b}c\overline{c}} \ = \ \sum_j \eps_j R_{a\overline{b}j\overline{j}} +\sum_{\alpha }R_{a\overline{b}\alpha \overline{\alpha }} \\
 & = & - \sum_j \eps_j \,G_{j\overline{j},a \overline{b }} + \sum_{j,\,\ell } \eps_j\,\eps_{\ell} \,G_{a\overline{\ell},j } \overline{ G_{b\overline{\ell},j }}  - \sum_{\alpha } G_{a \overline{b }, \alpha \overline{\alpha }}.
\end{eqnarray*}
So we have
\begin{eqnarray*}
 R_{y\overline{\sigma }} & =  & - \sum_j \eps_j h_{u\overline{\sigma }, \, j\overline{j}y} - \sum_{j\, \ell } \eps_j \eps_{\ell } h_{ u\overline{u }, \, j \overline{\ell }y} R^h_{u\overline{\sigma }\ell \overline{j }} \\
 R_{\sigma \overline{\sigma }} & =  &  \sum_j \eps_j R^h_{\sigma \overline{\sigma }j\overline{j}} - |\sigma |^2 \sum_j \eps_j\xi_j  + \sum_{j,\, \ell } \eps_j \eps_{\ell } | R^h_{ \sigma \overline{v } j \overline{\ell }} |^2 + r |\sigma |^2 \\
 R_{y\overline{y }} & =  & \lambda \sum_j \eps_j R^g_{j\overline{j}y\overline{y}} - \sum_j \eps_j h_{u\overline{u}, \, y\overline{y}j\overline{j}}  + (\sum_j\eps_j\xi_j )\, (\sum_i\xi_i |y_i|^2) + \sum_j \eps_j |\xi_j|^2 |y_j|^2 +\\
 & & + \sum_{j, \, \ell} \eps_j \,\eps_{\ell } \, | h_{u\overline{u},\, y\overline{\ell}j} |^2  + \sum_{\alpha } R^h_{\alpha \overline{\alpha }y \overline{y}}  - (r-1) R^h_{v\overline{v}y\overline{y}}  + \sum_{\alpha , j} \eps_j | R^h_{\alpha \overline{v} y\overline{j}} |^2.
\end{eqnarray*}
Denote by $|y|^2=\sum_j |y_j|^2$, and $||y||^2=\sum_j (\lambda - \xi_j)|y_j|^2$. Consider the quantity
$$ \Phi = ||X||^2 R_{X\overline{X}} - R_{X\overline{X}X\overline{X}} = \Phi_0 + 2Re(\Phi_1) + \Phi_2 + 2Re(\Phi_3) + \Phi_4,$$
where
\begin{eqnarray*}
\Phi_0 & = & ||y ||^2 R_{y\overline{y}} - R_{y\overline{y}y\overline{y}}, \\
\Phi_1 & = & ||y||^2 R_{y\overline{\sigma}} - R_{y\overline{y}y\overline{\sigma }}, \\
\Phi_2 & = & ||y||^2 R_{\sigma \overline{\sigma }} + |\sigma |^2 R_{y\overline{y}} - 4R_{y\overline{y}\sigma \overline{\sigma }}, \\
\Phi_3 & = &  |\sigma |^2 R_{ y\overline{\sigma } }, \\
\Phi_4 & = & |\sigma |^2  R_{\sigma \overline{\sigma } } - R_{ \sigma \overline{\sigma } \sigma \overline{\sigma }}
\end{eqnarray*}
since $ R_{ y\overline{\sigma } y \overline{\sigma }}= R_{ y\overline{\sigma } \sigma \overline{\sigma }} = 0$. Let us write $\frac{1}{\lambda }=\eps$. When $\lambda $ is sufficiently large, we have
\begin{eqnarray*}
\Phi_0 & = & \lambda \left(  Ric^g_{\tilde{y}\overline{\tilde{y}}} - R^g_{\tilde{y}\overline{\tilde{y}}\tilde{y}\overline{\tilde{y}}} + \sum_{\alpha =2}^r R^h_{\alpha \overline{\alpha }\tilde{y}\overline{\tilde{y}}}  - (r-1) R^h_{v\overline{v}\tilde{y}\overline{\tilde{y}}} +O(\eps ) \right)  |y|^4,\\
\Phi_1 & = & |y|^3 |\sigma | O(1), \\
\Phi_2 & = & \big( \lambda r + O(1) \big) |y|^2 |\sigma |^2, \\
\Phi_3 & = & O(\eps ) |y| |\sigma |^3, \\
\Phi_4 & = &  \big( (r-2) + O(\eps ) \big) |\sigma |^4
\end{eqnarray*}
where $\tilde{y}=\frac{y}{|y|}$. Note that the condition (\ref{eq:curvature}) in Theorem 1.3 ensures that $\Phi_0>0$ when $\lambda $ is sufficiently large and $y\neq 0$. So under this condition and that $r\geq 3$, it is easy to see that the quantity $\Phi >0$ for any $0\neq X=y+\sigma$. Thus the metric $G$ has $Ric^{\perp }>0$ at the origin, hence everywhere on $P$. This completes the proof of Theorem 1.3. \qed

The verification of the curvature conditions in Example 1.4 is straight forward, so we omit it. We will postpone the proof of Theorem 1.5 to the next section, after we obtain some geometric consequences for the curvature condition $Ric^\perp >0$ first.

\vspace{0.5cm}

\section{Geometric consequences of $Ric^{\perp}>0$}

We begin with the proof of Theorem 1.6, which is a slight modification of an argument of T. Frankel \cite{Frankel}:

\begin{proof} Let $(M^n,g)$ be a compact K\"ahler manifold with $Ric^{\perp}>0$, and $Y_1$, $Y_2$ be two irreducible divisors  in $M^n$. We want to show that $Y_1$ and $Y_2$ always intersect each other. Assume the contrary, namely, assume that $Y_1\cap Y_2=\phi$, we want to derive a contradiction from that.

Let $\gamma : [0,\ell ] \rightarrow M^n$ be a unit speed geodesic from $Y_1$ to $Y_2$ which realizes the distance between them. Write $p=\gamma (0)\in Y_1$ and  $q=\gamma (\ell )\in Y_2$. Denote by $H_p\cong {\mathbb C}^{n-1}$ the $J$-invariant linear subspace of the tangent space $T_pM$ which is perpendicular to $\gamma'(0)$. It is just the orthogonal complement of $\mbox{sp} \{ \gamma'(0), J\gamma'(0)\}$. Define $H_q$ similarly.

 Since $M^n$ is K\"ahler, we have $\nabla J=0$ where $J$ is the almost complex structure of $M$, so the parallel translation along $\gamma $ will send $H_p$ onto $H_q$, as they are the complex hyperplanes of the tangent space of $M$ perpendicular to $\gamma '$.

For a unit vector field $X$ parallel along $\gamma$, the second variation of arc length is given by
$$ L''_X(0) = \langle \nabla_XX, \gamma' \rangle_q - \langle \nabla_XX, \gamma' \rangle_p - \int_0^{\ell } K(\gamma' \wedge X) dt. $$
To derive at a contradiction, we want to find such an $X$ with negative second variation. Let $\{ \eps_1, \ldots , \eps_{2n}\}$ be an orthonormal tangent frame of $M^n$ at $p$, such that $\eps_1=\gamma'$ and $\eps_{n+i}=J\eps_i$ for each $1\leq i\leq n$. Parallel translate them  along $\gamma$ and denote by the same letters. We see that both $H_p$ and $H_q$ are spanned by $\{ \eps_2, J\eps_2, \ldots , \eps_n, J\eps_n\}$.

For each $2\leq i\leq n$, we can find a complex curve $C$ through $p$ in a neighborhood of $p$ such that $T_pC$ is spanned by $\{ \eps_i, J\eps_i\}$. Extend $\eps_i$ to a vector field $X$ in $C$, then we have
$$ \nabla_XX + \nabla_{JX}JX = J( -\nabla_XJX + \nabla_{JX}X) = J[JX, X]. $$
Since $[JX, X]$ is in $C$, at $p$ it is perpendicular to $\gamma'$, hence we have
$$ \langle \nabla_{\eps_i} \eps_i + \nabla_{J\eps_i} J\eps_i , \gamma' \rangle_p = 0, $$
and similarly the same equality holds at $q$, so we get
$$ \sum_{i=2}^n ( L_{\eps_i}'' + L_{J\eps_i}'' ) = -\sum_{i=2}^n \int_0^{\ell } K(\gamma'\wedge \eps_i) + K(\gamma'\wedge J\eps_i) = - \int_0^{\ell } Ric^{\perp} (\gamma') < 0.$$
So at least one of the terms in the left hand side will be negative. Now if both $Y_1$ and $Y_2$ are smooth, then we must have $T_pY_1$ and $T_qY_2$ perpendicular to $\gamma'$, hence $H_p=T_pY_1$ and $H_q=T_qY_2$. So the above negative second variation term will contradict the  fact that $\gamma$ is the shortest geodesic from $Y_1$ to $Y_2$.

If $p$ is a singular point of $Y_1$, let us denote by $C_p\subset T_pM$ the tangent cone of $Y_1$ at $p$. It is a subvariety in the tangent space. We claim that the support (reduced part) of $C_p$, which we will still denote by the same letter for convenience,  coincides with $H_p$.  It suffices to show that $C_p\subset V_p$, where $V_p\cong {\mathbb R}^{2n-1}$ is the orthogonal complement of $\gamma '(0)$ in $T_pM$. Since $-v\in C_p$ for any $v\in C_p$, so if $C_p$ is not contained in $V_p$, then there will be $0\neq v\in C_p$ which makes an acute  angle with $\gamma'(0)$. By Theorem 11.8 of \cite{Whitney}, we know that for any $v\in C_p$, there exists a smooth arc $\sigma : [0,\epsilon ) \rightarrow Y_1$ such that $\sigma'(0)=v$. This will violate the assumption that $\gamma$ is the shortest curve between $Y_1$ and $Y_2$. So we have $C_p=H_p$, and similarly, $C_q=H_q$. Then the term with negative second variation along $\gamma$ will again contradict the fact that $\gamma$ is the shortest curve between $Y_1$ and $Y_2$. This completes the proof of Theorem 1.6.
\end{proof}

An equally effective approach is to work with the energy of a path $\gamma$, $\mathcal{E} (\gamma)$ (see for example \cite{Schoen-Wolfson} and \cite{NZ}).

The argument can be adapted to prove a Lefschetz type result for a pair of complex hypersurfaces $(Y_1, Y_2)$, or a hypersurface $Y$ in $M$. The key is that for any pair of hypersurfaces $Y_1, Y_2$, one may define $\Omega$ to be the space all paths originating from $Y_1$ and ending with $Y_2$. The energy of the path $\gamma\in \Omega$, $\mathcal{E}(\gamma)$ is defined as usual. It is well known that the critical points of the energy functional are geodesics which intersects $Y_i$ orthogonally (namely normal geodesics). The same argument as above implies the following index estimate, which includes the intersecting result as a consequence since the minimizers can be identified with  $Y_1\cap Y_2$ (cf. \cite{Schoen-Wolfson}).

\begin{corollary}\label{coro-41} Let $\gamma$ be a nontrivial critical point (namely a nonconstant normal geodesic after \cite{NZ}). Then the index of $ind(\gamma)\ge 1$. In particular,
\begin{equation}\label{eq:41}
\pi_0(\Omega, Y_1\cap Y_2)=\{0\},\quad \iota_*: \pi_1(Y_1, Y_1\cap Y_2)\to \pi_1(M, Y_2) \mbox{ is surjective}. \end{equation}
When $Y_1=Y_2=Y$, this implies that $\pi_1(M, Y)=\{0\}$.
\end{corollary}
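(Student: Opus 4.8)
The plan is to realize the statement as the output of Morse theory for the energy functional $\mathcal{E}$ on the path space $\Omega=\Omega(M;Y_1,Y_2)$, feeding in the index bound $\operatorname{ind}(\gamma)\geq 1$. First I would prove that bound. Let $\gamma:[0,\ell]\to M$ be a nonconstant normal geodesic, meeting $Y_1$ orthogonally at $p=\gamma(0)$ and $Y_2$ orthogonally at $q=\gamma(\ell)$. The Hessian of $\mathcal{E}$ at $\gamma$ is the index form $I(V,V)$ on admissible variation fields $V$ (piecewise smooth, with $V(0)\in T_pY_1$ and $V(\ell)\in T_qY_2$), whose endpoint contributions are the second fundamental forms of $Y_1$ and $Y_2$ along $\gamma'$. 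Taking the parallel fields $\eps_2,J\eps_2,\dots,\eps_n,J\eps_n$ spanning $H_p=T_pY_1$ and $H_q=T_qY_2$---admissible because $\gamma$ is normal and $\nabla J=0$ carries $H_p$ to $H_q$---the computation already performed in the proof of Theorem 1.6 shows that the boundary terms cancel within each $J$-pair and that $\sum_{i=2}^n\big(I(\eps_i,\eps_i)+I(J\eps_i,J\eps_i)\big)=-\int_0^\ell Ric^{\perp}(\gamma')\,dt<0$. Hence $I$ is negative on at least one admissible field, giving $\operatorname{ind}(\gamma)\geq 1$.

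Next I would apply the Morse theory of the path space (the Palais--Smale condition holds since $M$ is compact; see \cite{Schoen-Wolfson}). The minimum value of $\mathcal{E}$ is $0$, attained exactly on the constant paths over $Y_1\cap Y_2$, which is nonempty by Theorem 1.6; every other critical point is a nonconstant normal geodesic and so has index $\geq 1$ by the previous paragraph. Thus $\Omega$ has the homotopy type of $Y_1\cap Y_2$ with cells of dimension $\geq 1$ attached, so the inclusion $Y_1\cap Y_2\hookrightarrow\Omega$ is surjective on $\pi_0$, i.e. $\pi_0(\Omega,Y_1\cap Y_2)=\{0\}$.

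To pass to the relative $\pi_1$ statement I would use that $\Omega$ is the homotopy pullback of $Y_1\hookrightarrow M\hookleftarrow Y_2$. Evaluation at the initial endpoint gives a fibration $p:\Omega\to Y_1$, $\gamma\mapsto\gamma(0)$, whose fiber $F_{x_0}$ over a basepoint $x_0\in Y_1\cap Y_2$ is the space of paths in $M$ from $x_0$ to $Y_2$; hence $\pi_k(F_{x_0})\cong\pi_{k+1}(M,Y_2)$, and in particular $\pi_0(F_{x_0})=\pi_1(M,Y_2)$. Since $p$ is the identity on the subspace $Y_1\cap Y_2$ of constant paths, the map of pairs $p:(\Omega,Y_1\cap Y_2)\to(Y_1,Y_1\cap Y_2)$ has relative fiber $F_{x_0}$ and yields the exact sequence
\begin{equation*}
\cdots\to\pi_1(\Omega,Y_1\cap Y_2)\to\pi_1(Y_1,Y_1\cap Y_2)\xrightarrow{\ \partial\ }\pi_0(F_{x_0})\to\pi_0(\Omega,Y_1\cap Y_2),
\end{equation*}
in which $\partial$ sends the class of a path $\alpha$ in $Y_1$ from $x_0$ to $Y_1\cap Y_2$ to that same path regarded in $M$ as a path from $x_0$ to $Y_2$; that is, $\partial=\iota_*$ under $\pi_0(F_{x_0})=\pi_1(M,Y_2)$. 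As $\pi_0(\Omega,Y_1\cap Y_2)=\{0\}$ from the previous step, exactness forces $\iota_*$ to be surjective. Specializing $Y_1=Y_2=Y$ makes the source $\pi_1(Y,Y)$ trivial, whence $\pi_1(M,Y)=\{0\}$.

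The main obstacle I anticipate is not the curvature/index input---that is Theorem 1.6 reread as a second variation with free endpoints on $Y_1,Y_2$---but the homotopy-theoretic setup: one must make the Morse theory of the infinite-dimensional $\Omega$ rigorous (via finite-dimensional broken-geodesic approximation, together with the Morse--Bott degeneracy of the minimum $Y_1\cap Y_2$), and one must justify both the exact sequence of the fibration of pairs and the identification $\partial=\iota_*$. Both become routine once $\Omega$ is described as the homotopy pullback of $Y_1\hookrightarrow M\hookleftarrow Y_2$, and are precisely the technical points addressed in \cite{Schoen-Wolfson}.
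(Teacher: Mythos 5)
Your proof is correct and follows essentially the same route as the paper's: the index bound is obtained verbatim from the second-variation argument of Theorem 1.6 (reread with free endpoints on the complex hypersurfaces), and the homotopy conclusions come from Morse theory on $\Omega$ together with the fibration/exact-sequence argument, which is precisely what the paper delegates to \cite{Schoen-Wolfson} (cf. \cite{Ni-Wolfson}). You simply spell out the details—the cancellation of boundary terms in each $J$-pair, the identification $\pi_0(F_{x_0})\cong\pi_1(M,Y_2)$, and the exactness step—that the paper's two-sentence proof leaves to the citations.
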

\begin{proof} The index estimate follows verbatim from the above argument in proving that $Y_1\cap Y_2\ne \emptyset$. For rest claims the argument of \cite{Schoen-Wolfson} via the Morse theory and exact sequences  applies (cf. \cite{Ni-Wolfson}).
\end{proof}
Note that in \cite{NZ} it was  conjectured that $\pi_1(M)=\{0\}$. The last statement of the corollary is clearly a consequence of an affirmative answer to the conjecture.

Next we prove  the following geometric property for manifolds with $Ric^{\perp}>0$, which will be a key factor in determining the low dimensional cases:

\begin{theorem}\label{thm:41}
Let $(M^n,g)$ be a compact K\"ahler manifold with $Ric^{\perp}>0$. Let $C$ be an irreducible curve in $M$, with $f: \tilde{C} \rightarrow M$ the normalization of $C$. Denote by $g$ the genus of $\tilde{C}$ and $K_{\!M}$ the canonical line bundle of $M$. Then we have $K_M^{\!-\!1}C \geq 3-2g$.  In particular, $K_M^{\!-\!1}C \geq 3$ for any rational curve $C\subseteq M$.
\end{theorem}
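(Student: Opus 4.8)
The plan is to convert the intersection number $K_M^{-1}\cdot C$ into the degree of a pulled-back bundle and then split off the tangent direction of the curve. Set $V=f^{\ast}T^{1,0}M$, a rank-$n$ holomorphic vector bundle on $\tilde{C}$ carrying the pulled-back Hermitian metric and Chern connection, with curvature $\Theta^V=f^{\ast}\Theta^{T^{1,0}M}$. Since $K_M^{-1}=\det T^{1,0}M$, we have $c_1(V)=f^{\ast}c_1(T^{1,0}M)$, so $\deg V=\int_{\tilde{C}}f^{\ast}c_1(T^{1,0}M)=K_M^{-1}\cdot C$. Because $f$ is nonconstant, the differential $df\colon T_{\tilde{C}}\to V$ is a nonzero holomorphic bundle map; its image generates a line subbundle $L\subset V$, namely the saturation of $\operatorname{im}(df)$, which over the smooth curve $\tilde{C}$ is automatically a subbundle with locally free quotient $Q$ of rank $n-1$. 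From $0\to L\to V\to Q\to 0$ we obtain $\deg V=\deg L+\deg Q$, and the goal becomes bounding the two summands.

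First I would bound $\deg L$. Since $L$ is the saturation of $\operatorname{im}(df)$, the map $df$ realizes $L\cong T_{\tilde{C}}(D)$ for an effective ramification divisor $D\ge 0$, so $\deg L=(2-2g)+\deg D\ge 2-2g$; equivalently, a nonzero map of line bundles $T_{\tilde{C}}\to L$ forces $\deg T_{\tilde{C}}=2-2g\le \deg L$. It therefore suffices to prove $\deg Q\ge 1$, for then $K_M^{-1}\cdot C=\deg V\ge (2-2g)+1=3-2g$, and setting $g=0$ gives $K_M^{-1}\cdot C\ge 3$ for rational curves.

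The heart of the argument is $\deg Q\ge 1$, and this is exactly where $Ric^{\perp}>0$ enters. Identify $Q$ with the orthogonal complement $L^{\perp}\subset V$ as a smooth Hermitian bundle and give it the quotient metric. The Gauss--Codazzi formula for a quotient bundle gives $\Theta^Q=\Theta^V|_{L^{\perp}}+\beta\wedge\beta^{\ast}$, where $\beta$ is the second fundamental form; on the curve $\sqrt{-1}\operatorname{tr}(\beta\wedge\beta^{\ast})\ge 0$ as a $(1,1)$-form, so dropping it yields
$$\deg Q=\frac{\sqrt{-1}}{2\pi}\int_{\tilde{C}}\operatorname{tr}\Theta^Q\ \ge\ \frac{\sqrt{-1}}{2\pi}\int_{\tilde{C}}\operatorname{tr}\big(\Theta^V|_{L^{\perp}}\big).$$
At a point where $X:=df(\partial/\partial z)\neq 0$, choose an orthonormal frame $e_1=X/|X|,e_2,\dots,e_n$ of $T^{1,0}_{f(z)}M$ with $e_2,\dots,e_n$ spanning $X^{\perp}=L^{\perp}$; then the integrand is $\sum_{a=2}^{n}R(X,\overline{X},e_a,\overline{e_a})=Ric(X,\overline{X})-R(X,\overline{X},X,\overline{X})/|X|^{2}=Ric^{\perp}_{X\overline{X}}$. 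This is strictly positive wherever $X\neq 0$ and vanishes only at the finitely many ramification points, so the integral is strictly positive and $\deg Q>0$. Since $\deg Q$ is an integer, $\deg Q\ge 1$, completing the proof.

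The main obstacle is the curvature-comparison step. One must carefully pass to the saturation $L$ so that $Q$ is genuinely a vector bundle, verify the correct sign of the second fundamental form term in the quotient Gauss--Codazzi identity, and check that the restricted trace $\operatorname{tr}(\Theta^V|_{L^{\perp}})$ reproduces precisely $Ric^{\perp}_{X\overline{X}}$ with the correct $|X|^{2}$ normalization, together with the fact that the ramification points (where $X=0$) contribute nothing to the integral. Everything else --- the additivity of degree and the estimate $\deg L\ge 2-2g$ --- is formal.
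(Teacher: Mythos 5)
Your proposal is correct, and it in fact yields the marginally sharper bound $K_M^{-1}C\geq 3-2g+\deg D$. It is best described as the paper's own argument in dual packaging rather than a genuinely independent proof: the paper integrates the ambient Ricci form over the smooth part $U$ of $C$ and invokes the decreasing property of curvature for complex submanifolds --- which is exactly the subbundle half of the Gauss--Codazzi identity for $L=T_{\tilde{C}}(D)\subset V=f^{\ast}T_M$ --- to compare $\int_U Ric_g$ with the curvature integral of the degenerate metric $f^{\ast}\omega_g$ on $T_{\tilde{C}}(D)$, and then applies integrality to $K_M^{-1}C>2-2g$. You instead split $\deg V=\deg L+\deg Q$ along $0\to L\to V\to Q\to 0$, bound $\deg L\geq 2-2g$ formally, and spend the hypothesis $Ric^{\perp}>0$ on the quotient half of the same Gauss--Codazzi identity, applying integrality to $\deg Q>0$. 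Since traces are additive along the exact sequence, so that $\operatorname{tr}\Theta^{L}+\operatorname{tr}\Theta^{Q}=\operatorname{tr}\Theta^{V}$ pointwise and $\deg L+\deg Q=\deg V$, your inequality $\operatorname{tr}\Theta^{Q}\geq\operatorname{tr}\bigl(\Theta^{V}|_{L^{\perp}}\bigr)=Ric^{\perp}_{X\overline{X}}\,\sqrt{-1}\,dz\wedge d\overline{z}$ and the paper's pointwise comparison are the same inequality read from opposite sides of the sequence. Your formulation does buy something: all objects are honest smooth Hermitian bundles on the smooth curve $\tilde{C}$, so the singular points of $C$ and the vanishing locus of $df$ require no separate bookkeeping (in the paper's version one must be careful about what $\int_{\tilde{C}\setminus\tilde{D}}\Theta(f^{\ast}\omega_g)$ means for the degenerate metric, distinguishing the smooth form from the curvature current with its masses on $\tilde{D}$); moreover, the isolated intermediate statement $\deg Q\geq 1$ is precisely the content of the paper's subsequent corollary on normal bundles, since $Q=N_C$ when $C$ is a smooth embedded rational curve. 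What the paper's version buys is brevity: no saturation or exact sequence is needed, only the classical Gauss equation for a complex curve in a K\"ahler manifold.
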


\begin{proof}
First, note that  the holomorphic sectional curvature of $(M^n,g)$ is a scalar-valued function $H$ on the projectivized cotangent bundle $\pi: {\mathbb P}(\Omega_M) \rightarrow M$:
$$ H([X]) = R_{X\overline{X}X\overline{X}}/|X|^4, $$
where $X$ is any nonzero type $(1,0)$ tangent vector in $M$. If $U$ is a piece of smooth complex curve in $M$, then the inclusion map $i: U\rightarrow M$ has a lift $\tilde{i}: U \rightarrow {\mathbb P}(\Omega_M)$. For any $x\in U$, consider
$$ Ric^{\perp}|_U= Ric^{\perp}_{X\overline{X}} /|X|^2, $$
where $X$ is any non-zero type $(1,0)$ tangent vector of $U$ at $x$.  It is a well defined function on $U$, and we have
$$ Ric^\perp|_U\ i^{\ast} \!\omega_g = Ric_g|_U - \tilde{i}^{\ast }\!H \ i^{\ast}\! \omega_g \geq Ric_g|_U - \Theta( i^{\ast}\! \omega_g),$$
where $Ric_g $ is the the Ricci $(1,1)$ form of $\omega_g$, and $\Theta (i^{\ast}\! \omega_g)$ is the curvature $(1,1)$ form of the restriction metric $i^{\ast}\! \omega_g=\omega_g|_U$. The inequality is due the decreasing property of curvature for complex submanifolds.

Now suppose that $C$ is an irreducible complex curve in $M$, and denote by $U$ its smooth part. Let $f: \tilde{C} \rightarrow C\subset M$ be the normalization of $C$, and write $\tilde{U}=f^{-1}(U)\subset \tilde{C}$. Since $Ric^\perp >0$ on $M$, by integrating the positive function $Ric^{\perp}|_U$ over $U$, we get
\begin{eqnarray*}
 K_M^{-1}C & = & \int_C Ric_g \ \  = \ \int_U Ric_g \\
 &  > & \int_U \Theta (i^{\ast }\!\omega_g) \ = \ \int_{\tilde{U}} \Theta (f^{\ast}\!\omega_g) \\
 & = & \int_{\tilde{C}\setminus \tilde{D}} \Theta (f^{\ast}\!\omega_g),
\end{eqnarray*}
where $D$ is the divisor in $\tilde{C}$ given by the  zeroes of $df$, and $\tilde{D}$ is the support of $D$.
Note that on $\tilde{C}$, $f^{\ast}\!\omega_g$ is a degenerate metric, with zeroes at $D$. In fact, $f^{\ast}\!\omega_g$ is a Hermitian metric on the holomorphic line bundle $T_{\tilde{C}}(D)$, with
$$ \int_{\tilde{C}} \Theta (f^{\ast }\!\omega_g) = c_1 (T_{\tilde{C}}(D)) = 2-2g + \deg (D), $$
and the integral of $\Theta (f^{\ast}\!\omega_g)$ over $\tilde{C}\setminus \tilde{D}$ is just $2-2g$. Since all terms involved are integers, the strict inequalities above gives
$ K_M^{-1}C \geq 3-2g$, thus completing the proof of the theorem.
\end{proof}

For a smooth rational curve $C\subset M$, we have the short exact sequence of vector bundles over $C$
$$ 0 \rightarrow T_C \rightarrow T_M|_C \rightarrow  N_C \rightarrow  0 ,$$
where $N_C$ is the normal bundle of $C$ in $M$. By taking their first Chern classes, we get
$$ c_1(N_C) = c_1(T_M|_C)- c_1(T_C) = K^{\!-\!1}_MC - 2 >0.$$
In other words, we have the following:

\begin{corollary}
For any smooth rational curve $C$ in a compact K\"ahler manifold $M^n$ with $Ric^{\perp}>0$, the normal bundle of $C$ has positive first Chern class.
\end{corollary}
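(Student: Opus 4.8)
The plan is to reduce the statement to the sharp bound $K_M^{-1}C \geq 3$ for rational curves already established in Theorem \ref{thm:41}, and then to read off the degree of $N_C$ from the normal bundle sequence. Since $C$ is a \emph{smooth} rational curve, $C \cong {\mathbb P}^1$ and its geometric genus is $g = 0$, so the rational-curve case of Theorem \ref{thm:41} applies directly.

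First I would write down the short exact sequence of holomorphic vector bundles on $C$,
$$ 0 \to T_C \to T_M|_C \to N_C \to 0, $$
where $N_C$ is the normal bundle. On a compact curve the first Chern class (degree) is additive across short exact sequences, so $c_1(T_M|_C) = c_1(T_C) + c_1(N_C)$. Next I would identify the two known terms: the restriction satisfies $c_1(T_M|_C) = c_1(T_M)\cdot C = K_M^{-1}C$, the anticanonical degree, and since $C \cong {\mathbb P}^1$ one has $c_1(T_C) = \deg T_{{\mathbb P}^1} = 2$. Combining these gives
$$ c_1(N_C) = K_M^{-1}C - 2. $$
Finally I would invoke the bound $K_M^{-1}C \geq 3$ from Theorem \ref{thm:41} to conclude $c_1(N_C) \geq 1 > 0$.

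There is essentially no obstacle left once Theorem \ref{thm:41} is in hand: the corollary is a one-line consequence of the normal bundle sequence together with the bound $K_M^{-1}C \geq 3$. The only point deserving a word of care is that $c_1(N_C)$, $c_1(T_C)$ and $K_M^{-1}C$ are all integers (degrees of line bundles on the smooth projective curve $C \cong {\mathbb P}^1$); this integrality is exactly what lets the strict positivity of $Ric^{\perp}$ — which in Theorem \ref{thm:41} produces the strict inequality $K_M^{-1}C > 2-2g = 2$ — upgrade to $K_M^{-1}C \geq 3$, and hence to $c_1(N_C) \geq 1 > 0$. All the genuine content therefore resides in Theorem \ref{thm:41}, and no further input about the curvature or the geometry of $M$ is required here.
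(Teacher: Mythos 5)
Your proof is correct and is essentially identical to the paper's own argument: the paper likewise applies the normal bundle sequence $0 \to T_C \to T_M|_C \to N_C \to 0$, additivity of first Chern classes, and the bound $K_M^{-1}C \geq 3$ from the preceding theorem to get $c_1(N_C) = K_M^{-1}C - 2 > 0$. Nothing further is needed.
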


As an immediate consequence, we know that if $M^n$ is the product ${\mathbb P}^1 \times N$, or more generally, if there is a morphism $f: M \rightarrow N$ where a generic fiber is a smooth rational curve, then $M^n$ cannot admit any K\"ahler metric with $Ric^{\perp}>0$. In particular, the K\"ahler C-space ${\mathbb P}(T_{\!{\mathbb P} ^{\,2}})$ does not admit such a metric since it is a ${\mathbb P}^1$-bundle.

Next let us prove Theorem \ref{thm:15}. We will divide the proof into three steps. In the first step, we prove that the base surface must be ${\mathbb P}^2$. In the second step, we show that the fiber bundle must be the projectivization of a vector bundle. Finally, in step three, we show that the vector bundle must be the trivial bundle tensoring with a line bundle.

\begin{lemma}
Let $p: P^n \rightarrow S$ be a holomorphic fiber bundle over a compact complex surface $S$, with fiber ${\mathbb P}^{n-2}$ where $n\geq 4$. Assume that $P^n$ admits a K\"ahler metric with  $Ric^\perp>0$. Then $S$ must be biholomorphic to ${\mathbb P}^2$.
\end{lemma}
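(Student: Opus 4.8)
The plan is to show that a compact complex surface $S$ which is the base of a $\mathbb{P}^{n-2}$-bundle $P^n$ carrying a $Ric^\perp>0$ metric must be $\mathbb{P}^2$. The main tool is Theorem \ref{thm:41}: every rational curve $C\subseteq P$ satisfies $K_P^{-1}C\geq 3$, and more generally $K_P^{-1}C\geq 3-2g$ for a curve of geometric genus $g$. The strategy is to push geometric information on $P$ down to $S$ and thereby force $S$ to be a rational surface with very restricted numerical invariants, ultimately $\mathbb{P}^2$.

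First I would establish that $P$, being K\"ahler with $Ric^\perp>0$, is projective with finite (hence, after the Lefschetz-type Corollary \ref{coro-41} and the rational connectedness philosophy, trivial) fundamental group, and that its canonical bundle is negative enough along curves. The key structural input is that a $\mathbb{P}^{n-2}$-bundle is a smooth morphism $p:P\to S$ whose relative anticanonical bundle is $p$-ample; restricting to a fiber $F\cong\mathbb{P}^{n-2}$ one computes $K_P^{-1}\cdot \ell = (n-1)$ for a line $\ell\subset F$, which is automatically $\geq 3$ once $n\geq 4$, so the fibers give no contradiction. The real leverage comes from curves that map \emph{nonconstantly} to $S$: for such a curve I would use the adjunction/projection formula $K_P = p^*K_S \otimes K_{P/S}$ to relate $K_P^{-1}\cdot \tilde{C}$ to $K_S^{-1}\cdot C_S$ (where $C_S$ is the image in $S$) plus a nonnegative relative contribution. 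Since $K_P^{-1}\cdot \tilde C \geq 3-2g$ for every curve, I would deduce that $S$ itself satisfies $K_S^{-1}\cdot C_S > 0$ for every irreducible curve $C_S\subset S$, i.e. $-K_S$ is strictly positive on curves, which is the numerical signature of a Fano surface (del Pezzo surface).

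Having reduced to $S$ being a del Pezzo surface, the final step is to eliminate every del Pezzo surface except $\mathbb{P}^2$. The candidates are $\mathbb{P}^1\times\mathbb{P}^1$ and the blow-ups $\mathbb{P}^2\#k\overline{\mathbb{P}^2}$ for $1\leq k\leq 8$. For each of these there is a rational curve $C_S\subset S$ with small anticanonical degree---an exceptional $(-1)$-curve has $K_S^{-1}\cdot C_S = 1$, and a fiber of a ruling on $\mathbb{P}^1\times\mathbb{P}^1$ has $K_S^{-1}\cdot C_S = 2$. Lifting such a low-degree curve to a curve $\tilde C\subset P$ (for instance via a section or a multisection of $p$ over $C_S$, or by taking the preimage surface and a suitable curve in it) and applying Theorem \ref{thm:41} should force $K_P^{-1}\cdot\tilde C$ to be too small, contradicting the bound $\geq 3$ for rational curves. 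I expect the cleanest route is to observe that Theorem \ref{thm:16} and its Corollary already rule out the existence of fiberings over curves and blow-ups of points, so $S$ cannot be a nontrivial blow-up (no $(-1)$-curves may appear) nor a product ruled surface; this forces $S$ to be minimal with Picard number one among del Pezzo surfaces, leaving only $\mathbb{P}^2$.

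The main obstacle will be the middle step: transferring the curve-degree inequality on $P$ to a genuinely positive statement about $-K_S$ on \emph{all} curves of $S$, and doing so uniformly regardless of the genus and singularities of the lifted curves. The difficulty is that a low-degree rational curve in $S$ need not lift to a rational curve in $P$---it lifts to curves in the $\mathbb{P}^{n-2}$-bundle over it, whose genus one does not control a priori. I would handle this by choosing the lift carefully: over a rational curve $C_S$ one restricts the bundle to get a $\mathbb{P}^{n-2}$-bundle over $\mathbb{P}^1$, which admits a section that is itself rational, so one obtains a genuine rational curve $\tilde C\subset P$ mapping isomorphically to $C_S$. Then $K_P^{-1}\cdot\tilde C \geq 3$ combined with the projection formula computation of $K_P^{-1}\cdot\tilde C$ in terms of $K_S^{-1}\cdot C_S$ and the section's relative degree yields the contradiction when $C_S$ has anticanonical degree $1$ or $2$. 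Ensuring the relative contribution does not unexpectedly inflate $K_P^{-1}\cdot\tilde C$ is the delicate bookkeeping I would need to control, and choosing a section of minimal relative degree is the device that keeps it in hand.
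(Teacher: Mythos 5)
Your treatment of rational curves is essentially correct, and close in spirit to the paper's Lemma 4.3: over (the normalization of) a rational curve $C_S\subset S$ the restricted bundle splits by Grothendieck's theorem, the section $\tilde C$ corresponding to a minimal-degree quotient line bundle has nonpositive relative anticanonical degree, so $K_P^{-1}\cdot\tilde C\le K_S^{-1}\cdot C_S$, and Theorem \ref{thm:41} then forces $K_S^{-1}\cdot C_S\ge 3$ for \emph{every rational curve} in $S$. (The paper instead kills $(-1)$- and $(-2)$-curves by blowing down, taking the proper transform of a curve missing the center, and applying Theorem \ref{thm:16} to the two disjoint preimage divisors; your route is a legitimate alternative for that part.) Together with Corollary 1.7, this disposes of non-minimal bases, ruled bases, elliptic fibrations, and --- granting the nontrivial fact that projective K3 and Enriques surfaces carry rational curves or elliptic pencils --- the $\kappa=0$ bases.

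The genuine gap is your middle claim that these curve inequalities make $-K_S$ positive on \emph{all} irreducible curves, i.e.\ that $S$ is del Pezzo. Theorem \ref{thm:41} only gives $K_P^{-1}\cdot\tilde C\ge 3-2g$, which degrades with genus, so for a curve of genus $g\ge 2$ your lift yields $K_S^{-1}\cdot C_S\ge 3-2g$, which carries no positivity; more fundamentally, nothing in your toolkit produces even one rational curve in $S$. Concretely, let $S$ be a generic quintic surface in $\mathbb{P}^3$: it is simply connected (Lefschetz), minimal of general type, has Picard number one (Noether--Lefschetz), hence admits no pair of disjoint divisors and no fibration over a curve, and it contains no rational curves at all (a theorem of G.~Xu). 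For $P\to S$ every rational curve of $P$ then lies in a fiber, where $K_P^{-1}\cdot C=(n-1)\deg C\ge n-1\ge 3$, and one checks that any two effective divisors of $P$ meet; so Theorem \ref{thm:16}, Theorem \ref{thm:41} and Corollary 1.7 are all silent, and your outline cannot conclude. This is exactly where the paper switches to a non-curve-theoretic mechanism: finiteness of $\pi_1(P)$ and the vanishing theorem $h^{1,0}(P)=h^{2,0}(P)=0$ of \cite{NZ} force $q=p_g=0$ for $S$ (this already excludes the quintic, whose $p_g=4$, by pulling back a holomorphic $2$-form), and for a general-type $S$ with $p_g=0$ the paper uses ampleness of $K_S$, Riemann--Roch ($h^0(2K_S)=1+c_1^2\ge 2$), and a Bochner argument at the maximum of $|p^*\sigma|^2$ for $\sigma\in H^0(S,2K_S)$, producing a point where $R_{1\bar 1 v\bar v}+R_{2\bar 2 v\bar v}\le 0$ for all $v$, contradicting $Ric^{\perp}>0$. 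Some Hodge-theoretic or Bochner-type input of this kind is indispensable, and your proposal is missing it.
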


\begin{proof} First of all, $S$ is projective since $P^n$ is so. If $C$ is a $(-1)$ or $(-2)$ curve in $S$, namely, a smooth rational curve with self intersection number $-1$ or $-2$, then since $C$ can be blown down to a smooth or singular point, by considering the proper transform of a smooth curve down stair away from that point, we know that there exists a smooth curve $D$  in $S$ which does not intersect $C$. Then the smooth hypersurfaces $p^{-1}(C)$ and $p^{-1}(D)$ in $P$ do not intersect, violating Theorem 1.6. So $S$ cannot contain any $(-1)$ or $(-2)$ curve, hence is a minimal surface.

By Corollary 1.7, we know that $P$, hence $S$,  cannot fiber over a curve. Let $\kappa $ be the Kodaira dimension of the minimal algebraic surface $S$. If $\kappa = -\infty $, then the only choice for $S$ is ${\mathbb P}^2$ since it cannot be ruled. If $\kappa =0$, then a finite cover of $S$ is either a complex torus or a K3 surface, which admits a non-trivial holomorphic $2$-form. Pulling it back to the finite cover of $P$, we get a violation to the vanishing theorem in \cite{NZ} (Theorem 1.7). When $\kappa =1$, $S$ is an elliptic surface, which is not possible. So we are left with the case of $\kappa =2$, namely, $S$ is a general type surface.

Since $S$ does not contain any $(-2)$ curve, its canonical line bundle $K_{\!S}$ is ample. From the fact that $P$ has finite fundamental group and does not have any holomorphic $1$ or $2$-form on it, we know that $S$ satisfies $q=p_g=0$, thus
$$ \chi({\mathcal O}_S) = 1-q+p_g= \frac{1}{12}(c_1^2+c_2) = 1,$$
and $S$ must be simply-connected. So $S$ is homeomorphic to (but not diffeomorphic to) ${\mathbb P}^2 \# k\overline{{\mathbb P}^2}$, the blowing up of ${\mathbb P}^2$ at $k$ general points, with $1\leq k\leq 8$.

By Riemann-Roch Theorem, we get $h^0(2K_{\!S})=1+c_1^2=10-k\geq 2$. Take a non-trivial global holomorphic section $\sigma$ of the line bundle  $K_{\!S}^{\otimes 2}$ on $S$. Then $s=p^{\ast}\sigma$ is a non-trivial global holomorphic section of $L^{\otimes 2}$ on $P$, where $L = p^{\ast }K_{\!S}$ is a sub line bundle of $\bigwedge^{\!2}\!\Omega_{\!P}$.

The K\"ahler metric on $P$ naturally induces metrics on $L$ and $L^{\otimes 2}$. By applying the Bochner formula for $|s|^2$, we know that at the point $x\in P$ where $|s|^2$ reaches its maximum, we have $\Theta_{L^{\otimes 2}} (s, \overline{s}, \cdot , \overline{\cdot }) \geq 0$, where $\Theta$ is the curvature form.  In a small neighborhood of $p(x)$, we may take a local holomorphic section $\tau $ of $K_{\!S}$, such that $\tau^2 =\sigma$. Then $t=p^{\ast }\tau$ is a local holomorphic $2$-form in $P$ such that $t^2=s$. We have at $x$ that
$$  \Theta_{L^{\otimes 2}} (s, \overline{s}, \cdot , \overline{\cdot })  = 2\Theta_{L} (t, \overline{t}, \cdot , \overline{\cdot }) \geq 0.$$
Note that $2$-forms correspond to skew-symmetric matrices under unitary frame of $P^n$, which can be diagonalized into $2\times 2$ blocks. Since we have $t\wedge t=0$, we know that there exists unitary frame $\{ e_1, \ldots , e_n\}$ and dual coframe $\{ \varphi_1, \ldots , \varphi_n\}$ of $P^n$ at $x$, such that $t=\lambda \varphi_1\wedge \varphi_2$ with $\lambda \neq 0$. So the above curvature condition becomes
$$ R_{1\overline{1}v\overline{v}}+ R_{2\overline{2}v\overline{v}} \leq 0 $$
for any type $(1,0)$ tangent vector $v$ at $x$. As in the proof of Theorem 1.7 in \cite{NZ}, we know this will lead to a contradiction to the condition $Ric^{\perp}>0$ on $P$. So the $\kappa =2$ case is not possible, and we have completed the proof of the lemma.
\end{proof}

The following fact should be well-known in algebraic geometry, and we learned it from Joe Harris many years ago.

\begin{lemma}
Let $X$ be a projective manifold with $h^2(X, {\mathcal O}^{\ast }) =0$, then any holomorphic ${\mathbb P}^{k\!-\!1}$-bundle over $X$ is the projectiviation of some rank $k$ holomorphic vector bundle over $X$.
\end{lemma}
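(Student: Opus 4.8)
The plan is to realize the statement as the vanishing of the classical Brauer-type obstruction attached to the central extension of structure groups
\begin{equation*}
1 \longrightarrow \mathbb{C}^{\ast} \longrightarrow GL(k,\mathbb{C}) \longrightarrow PGL(k,\mathbb{C}) \longrightarrow 1,
\end{equation*}
in which the scalars $\mathbb{C}^{\ast}$ sit as the center. Since $PGL(k,\mathbb{C})=\mathrm{Aut}(\mathbb{P}^{k-1})$, a holomorphic $\mathbb{P}^{k-1}$-bundle over $X$ is classified by a class in the (non-abelian) cohomology set $H^1(X,\underline{PGL}_k)$, where $\underline{PGL}_k$ is the sheaf of germs of holomorphic $PGL(k,\mathbb{C})$-valued functions; likewise rank $k$ holomorphic vector bundles are classified by $H^1(X,\underline{GL}_k)$, and $\mathcal{O}^{\ast}$ is the sheaf playing the role of the center. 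Passing to cohomology along the sequence of sheaves yields a connecting map $\delta\colon H^1(X,\underline{PGL}_k)\to H^2(X,\mathcal{O}^{\ast})$, and the fact I would invoke is that a $\mathbb{P}^{k-1}$-bundle lies in the image of $H^1(X,\underline{GL}_k)$ precisely when its image under $\delta$ is trivial. As $H^2(X,\mathcal{O}^{\ast})=0$ by hypothesis, every such class automatically lifts, which is exactly the assertion.

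I would make this explicit in \v{C}ech terms, so that no non-abelian machinery need be quoted as a black box. Fix a good (e.g. Stein) open cover $\{U_\alpha\}$ over which the given bundle $P$ is trivialized, with holomorphic transition functions $g_{\alpha\beta}\colon U_\alpha\cap U_\beta \to PGL(k,\mathbb{C})$ satisfying $g_{\alpha\beta}g_{\beta\gamma}g_{\gamma\alpha}=\mathrm{id}$. Since $GL(k,\mathbb{C})\to PGL(k,\mathbb{C})$ is a holomorphic principal $\mathbb{C}^{\ast}$-bundle, it admits local holomorphic sections; after refining the cover I may therefore lift each $g_{\alpha\beta}$ to a holomorphic $\widehat{g}_{\alpha\beta}\colon U_\alpha\cap U_\beta \to GL(k,\mathbb{C})$. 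The triple products $c_{\alpha\beta\gamma}=\widehat{g}_{\alpha\beta}\widehat{g}_{\beta\gamma}\widehat{g}_{\gamma\alpha}$ project to the identity in $PGL(k,\mathbb{C})$, hence are scalar matrices, i.e. elements $c_{\alpha\beta\gamma}\in\mathcal{O}^{\ast}(U_\alpha\cap U_\beta\cap U_\gamma)$; a direct check shows $\{c_{\alpha\beta\gamma}\}$ is a \v{C}ech $2$-cocycle, whose class in $H^2(X,\mathcal{O}^{\ast})$ is the obstruction $\delta([P])$.

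Now I would invoke the hypothesis: since $H^2(X,\mathcal{O}^{\ast})=0$, the cocycle $\{c_{\alpha\beta\gamma}\}$ is a coboundary, so there exist $b_{\alpha\beta}\in\mathcal{O}^{\ast}(U_\alpha\cap U_\beta)$ with $c_{\alpha\beta\gamma}=b_{\alpha\beta}\,b_{\beta\gamma}\,b_{\gamma\alpha}$ (scalars commute, so the ordering is immaterial). Replacing $\widehat{g}_{\alpha\beta}$ by $\widetilde{g}_{\alpha\beta}=b_{\alpha\beta}^{-1}\widehat{g}_{\alpha\beta}$ — legitimate precisely because $b_{\alpha\beta}$ is a central scalar — gives holomorphic $GL(k,\mathbb{C})$-valued functions satisfying the exact cocycle relation $\widetilde{g}_{\alpha\beta}\widetilde{g}_{\beta\gamma}\widetilde{g}_{\gamma\alpha}=\mathrm{id}$. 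These are the transition functions of a rank $k$ holomorphic vector bundle $E$ on $X$; because the scalar rescaling is invisible after projectivization, the $PGL$-cocycle of $\mathbb{P}(E)$ is again $\{g_{\alpha\beta}\}$, whence $P\cong \mathbb{P}(E)$ (equivalently $\mathbb{P}(E^{\ast})$ after dualizing), proving the lemma.

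I expect the only genuinely non-formal points to be (a) the existence of local holomorphic lifts of the transition functions, which follows from $GL(k,\mathbb{C})\to PGL(k,\mathbb{C})$ being a locally trivial $\mathbb{C}^{\ast}$-bundle of complex Lie groups, and (b) the fact that \v{C}ech cohomology on a good cover computes $H^2(X,\mathcal{O}^{\ast})$, so that the vanishing hypothesis can be applied at the cocycle level. Everything else is bookkeeping: verifying the $2$-cocycle identity for $\{c_{\alpha\beta\gamma}\}$ and checking that the central rescaling both restores the exact $GL$-cocycle condition and leaves the projectivized bundle unchanged. There is no hard analysis here, and the obstruction-theoretic and \v{C}ech phrasings are interchangeable; in the final write-up I would keep whichever is shorter.
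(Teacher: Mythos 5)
Your proposal is correct and follows essentially the same route as the paper: both use the central extension $1\to {\mathcal O}^{\ast}\to {\mathcal G}{\mathcal L}_k \to {\mathcal P}{\mathcal G}{\mathcal L}_k\to 1$ of sheaves of groups, identify ${\mathbb P}^{k-1}$-bundles with classes in $H^1(X,{\mathcal P}{\mathcal G}{\mathcal L}_k)$ and rank $k$ vector bundles with classes in $H^1(X,{\mathcal G}{\mathcal L}_k)$, and deduce from $H^2(X,{\mathcal O}^{\ast})=0$ that every projective-bundle class lifts. The only difference is that you unwind the non-abelian connecting map at the \v{C}ech cocycle level, which the paper invokes as a standard fact; this is a legitimate and complete way to fill in the same argument.
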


In particular, for $X={\mathbb P}^m$ with any $m$, since $h^2({\mathcal O})= h^3({\mathbb Z})=0$, the exponential sequence gives $h^2( {\mathcal O}^{\ast }) =0$, hence the lemma applies.

Denote by ${\mathcal G}{\mathcal L}_{k}$ and ${\mathcal P}{\mathcal G}{\mathcal L}_{k}$  the (non-abelian) sheaf of holomorphic maps from $X$ into $GL_k({\mathbb C})$ or $PGL_k({\mathbb C})$, respectively. We have the short exact sequence of sheaves on $X$:
$$ 0 \rightarrow {\mathcal O}^{\ast } \rightarrow {\mathcal G}{\mathcal L}_{k} \rightarrow  {\mathcal P}{\mathcal G}{\mathcal L}_{k} \rightarrow 0.$$
From the vanishing of $H^2({\mathcal O}^{\ast})$, we get the surjection
$$ H^1(X, {\mathcal G}{\mathcal L}_{k}) \rightarrow H^1(X, {\mathcal P}{\mathcal G}{\mathcal L}_{k}) \rightarrow 0.$$
On the other hand, the isomorphism classes of holomorphic ${\mathbb P}^{k-1}$-bundles over $X$ are in one one correspondence with the cohomology classes in $H^1(X, {\mathcal P}{\mathcal G}{\mathcal L}_{k})$, while the isomorphism classes of holomorphic rank $k$ vector bundles are corresponding to elements of $H^1(X, {\mathcal G}{\mathcal L}_{k})$, so the statement of the lemma holds.

Combine Lemma 4.1 with Lemma 4.2, we know that the manifold $P$ in Theorem 1.5 must be in the form ${\mathbb P}(E)$ for some rank $r\geq 3$ holomorphic vector bundle $E$ over ${\mathbb P}^2$. We want to show that $P={\mathbb P}^2 \times {\mathbb P}^{r-1}$, or equivalently, $E\cong {\mathcal O}(k)\otimes {\mathcal O}^{\oplus r}$ for some integer $k$. We first prove the following

\begin{lemma}
Let $E$ be a rank $r\geq 3$ holomorphic vector bundle over ${\mathbb P}^2$, and $P={\mathbb P}(E)$ admits a K\"ahler metric $Ric^{\perp }>0$. Then for any line $L \subset {\mathbb P}^2$, $E|_L $ is the tensor product of a line bundle with the trivial bundle.
\end{lemma}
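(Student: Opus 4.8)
The plan is to reduce the statement to pure algebraic geometry and feed it into Theorem \ref{thm:41}. By Grothendieck's splitting theorem, the restriction of $E$ to $L\cong{\mathbb P}^1$ decomposes as $E|_L\cong\bigoplus_{i=1}^r{\mathcal O}_L(a_i)$ with $a_1\geq a_2\geq\cdots\geq a_r$, and the assertion is exactly that $a_1=a_r$. To produce the required test curves, I would restrict the projective bundle over $L$: set $P_L:=\pi^{-1}(L)={\mathbb P}(E|_L)$, a ${\mathbb P}^{r-1}$-bundle over ${\mathbb P}^1$ sitting inside $P$ as a smooth divisor. The splitting furnishes distinguished sections of $P_L\to L$: in the convention used here (where ${\mathbb P}(E)$ parametrizes one-dimensional quotients of $E$), the projection $E|_L\twoheadrightarrow{\mathcal O}_L(a_r)$ onto the summand of least degree gives a section whose image $C$ is a smooth rational curve in $P$.

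The heart of the argument is the normal bundle bookkeeping for $C$. First, the relative normal bundle inside $P_L$ is computed from the section: for the quotient onto ${\mathcal O}_L(a_r)$ with kernel $\bigoplus_{j\neq r}{\mathcal O}_L(a_j)$, one has $N_{C/P_L}\cong\bigoplus_{j\neq r}{\mathcal O}_L(a_r-a_j)$, every summand of which has degree $a_r-a_j\leq 0$. Second, $P_L$ is cut out by $\pi^{\ast}{\mathcal O}_{{\mathbb P}^2}(1)$, so $N_{P_L/P}|_C\cong\pi^{\ast}{\mathcal O}_{{\mathbb P}^2}(1)|_C\cong{\mathcal O}_{{\mathbb P}^1}(1)$, using that $\pi$ restricts to an isomorphism $C\xrightarrow{\sim}L$ and that a line has self-intersection $1$ in ${\mathbb P}^2$. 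Combining via the exact sequence $0\to N_{C/P_L}\to N_{C/P}\to N_{P_L/P}|_C\to 0$ and additivity of degrees,
$$ c_1(N_{C/P})=1+\sum_{j\neq r}(a_r-a_j). $$

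Finally I would apply adjunction. Since $C$ is a smooth rational curve, $\deg(T_P|_C)=\deg T_C+\deg N_{C/P}=2+c_1(N_{C/P})$, hence
$$ K_P^{-1}\cdot C=3+\sum_{j\neq r}(a_r-a_j). $$
Because $P$ carries a K\"ahler metric with $Ric^{\perp}>0$, Theorem \ref{thm:41} forces $K_P^{-1}\cdot C\geq 3$, so $\sum_{j\neq r}(a_r-a_j)\geq 0$; as each term is $\leq 0$ this sum vanishes and $a_j=a_r$ for all $j$. Thus $E|_L\cong{\mathcal O}_L(a_r)^{\oplus r}$, a line bundle tensored with the trivial bundle, as claimed.

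I expect the main obstacle to be purely one of conventions and signs: identifying the correct extremal section (the least-degree quotient, equivalently the most negative normal direction) and verifying the normal bundle formula $N_{C/P_L}\cong\bigoplus_{j\neq r}{\mathcal O}_L(a_r-a_j)$ with the right orientation. A useful sanity check is the Hirzebruch surface ${\mathbb F}_k={\mathbb P}({\mathcal O}\oplus{\mathcal O}(k))$, whose negative section has self-intersection $-k$, matching the formula. One should also confirm that $C$ is genuinely a smooth rational curve (immediate, being a section of a projective bundle over ${\mathbb P}^1$) so that the sharp bound $K_P^{-1}\cdot C\geq 3$ of Theorem \ref{thm:41} applies. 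No metric computation is needed beyond invoking that theorem.
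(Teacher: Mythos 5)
Your argument is correct and is essentially the paper's own proof: the paper likewise restricts to $p^{-1}(L)={\mathbb P}(E|_L)$, takes the sections corresponding to the quotient line bundles ${\mathcal O}(a_i)$, computes that the normal bundle of such a section in $P$ has first Chern class $ra_i-(a_1+\cdots+a_r)+1$, and concludes from the positivity of $c_1$ of the normal bundle of a smooth rational curve (Corollary 4.3), which is precisely Theorem \ref{thm:41} combined with the adjunction step you carry out explicitly. The only cosmetic difference is that you work with the single minimal-degree section, whereas the paper states the inequality for every section $C_i$ and sums.
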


\begin{proof}
By Grothendieck Theorem, $E|_L= {\mathcal O}(a_1) \oplus \cdots \oplus {\mathcal O}(a_r)$ on $L\cong {\mathbb P}^1$.  Denote by $p: P\rightarrow {\mathbb P}^2$ the projection, and let $X=p^{-1}(L)= {\mathbb P}(E|_L)$.  It is not hard  to see that, for the section $C_i\subset X$ corresponding to the quotient line bundle ${\mathcal O}(a_i)$ of $E|_L$, its normal bundle $N_i$ in $X$ will have first Chern class
$$ c_1(N_i) = ra_i - (a_1 +\cdots + a_r).$$
So the normal bundle of $C_i$ in $P$ will have first Chern class equal to above number plus $1$, which has to be positive by Corollary 4.3. Thus $c_1(N_i)\geq 0$ for each $i$, implying that all $a_i$ are equal. This completes the proof of the lemma.
\end{proof}

The above lemma says that when $P={\mathbb P}(E)$ over ${\mathbb P}^2$ admits a K\"ahler metric with $Ric^{\perp}>0$, $E|_L={\mathcal O}(a)^{\oplus r}$ for any line $L \subset {\mathbb P}^2$. Since $\det(E|_L) = (\det E)|_L$, we see that $a$ is independent of $L$, and replacing $E$ by $E(-a)$, we may assume that $E|_L$ is trivial for any line $L$ in ${\mathbb P}^2$. In other words, the bundle $E$ on ${\mathbb P}^2$ is uniformly trivial. By Theorem 3.2.1 in \cite{Okonek}\footnote{We learn about this crucial fact from Jun Li, who kindly supplied us with a direct proof of this theorem in our special case.}, we know that $E$ itself must be trivial. This completes the proof of Theorem 1.5.

\vspace{0.5cm}

\section{Proof of Theorems \ref{thm:18} and \ref{thm:19}}

Let $(M^3,g)$ be a compact K\"ahler manifold with $Ric^{\perp}>0$. Then $M$ is projective and the scalar curvature is everywhere positive, thus $M$ is uniruled and $K_{\!M}$ is not nef. By the cone-contraction theorem of Mori \cite{Mori} and Kollar  \cite{Kollar}, the contraction map $\phi_R: M \rightarrow Z$ of an extremal ray $R\subset \overline{NE}(M)$ could only be one of the following:
\begin{eqnarray*}
(E) \ && \dim Z =3, \,\phi_R \ \mbox{is} \mbox{ birational, and}  \\
 (E_1) && Z \ \mbox{is smooth, and} \ \phi_R \ \mbox{is the inverse of the blowing up of a smooth curve in} \ Z,\\
 (E_2) && \phi_R \ \mbox{is the inverse of the blowing up of a smooth or singular (3 types) point in} \ Z; \\
 (C) \ && \dim Z =2, \  \phi_R \ \mbox{is a fibration over} \  Z \ \mbox{whose fibers are plane conics, and the generic } \\
 && \mbox{fibers are  smooth;}\\
 (D) \ && \dim Z =1, \  \mbox{the generic fibers of} \ \phi_R \ \mbox{are Del Pezzo surfaces;} \\
 (F) \ && \dim Z =0, \ M \ \mbox{is Fano.}
\end{eqnarray*}
Note that $(E1)$ is not possible as any ruling would be smooth rational curve whose normal bundle has negative first Chern class. $(E2)$ is also not possible as the exceptional divisor is smooth in each of the four cases, and one can take a smooth hypersurface in $Z$ avoiding the point of blown up, and then its pull back in $M$ would be another smooth hypersurface not intersecting the exceptional divisor, violating Theorem 1.6. Similarly, $(C)$ is not possible as a generic fiber would be a smooth rational curve with trivial normal bundle, and $(D)$ is not possible by Corollary 1.7, so we are only left with the Fano case.

In the Fano case, since we know by Theorem 4.2 that the pseudo index of $M$ must be at least $3$, so by the recent result of Dedieu and H\"oring \cite{DH}, which characterizes projective spaces and quadrics amongst all Fano manifolds by the condition $i(M)\geq \dim (M)$,  we know that $M^3$ must be either ${\mathbb P}^3$ or ${\mathbb Q}^3$, thus completing the proof of Theorem 1.8.

Alternatively, since Fano threefolds are fully classified, we could also derive at the conclusion of Theorem 1.8 without using the deep theorem of \cite{DH}. First to rule out the Picard number $\rho (M) >1$ case. This can be done either by the $n=3$ case of the generalized Mukai conjecture $\rho(M) (i(M)-1) \leq n$, which forces $\rho (M)=1$, or by recalling the results of Mori and Mukai \cite{MoriMukai} on the classification of all Fano threefolds with  $\rho (M) >1$, which have $88$ deformation families. Such a manifold is either {\em imprimitive,} meaning that $M^3$ is the blowing up of another Fano threefold $Z$ along a smooth curve, or {\em primitive,} which means otherwise. The imprimitive cases cannot occur as any ruling in the exceptional divisor would be a smooth rational curve whose normal bundle has negative first Chern class. In the primitive case, Mori and Mukai showed that (Theorem 5 of \cite{MoriMukai}) either $\rho (M)=2$ and $M$ is a conic fibration over ${\mathbb P}^2$, or $\rho (M)=3$ and $M$ is a conic fibration over ${\mathbb P}^1\!\times {\mathbb P}^1$. Neither could occur as the generic fiber would be  a smooth rational curve with trivial normal bundle.

For Fano threefolds with $\rho (M) =1$, which are called {\em prime} Fano threefolds, there are $17$ deformation families,  fully classified by Iskovskikh \cite{Iskov}. Let $r$ be the largest integer where $K_M^{\!-\!1}=rA$ for some ample divisor $A$ in $M^3$. $r$ is called the {\em index} of $M$. It is well known that $r=4$ if and only if $M\cong {\mathbb P}^3$, and $r=3$ if and only if $M\cong {\mathbb Q}^3$, the smooth quadric in ${\mathbb P}^4$. When $r=1$ or $r=2$, it is known that $M$ contains a line, namely, smooth rational curve $C$ with $C\cdot A=1$. See for instance \cite{IP} (Theorem 4.5.8). So the pseudo index is $1$ or $2$, contradicting Theorem 4.2. This completes the proof of Theorem 1.8.

Next let us focus on the $4$-dimensional case. Let $(M^4,g)$ be a compact K\"ahler manifold of dimension $4$ with $Ric^{\perp}>0$ everywhere. Then we know that $M^4$ is projective, simply-connected, and uniruled. Denote by $i(M)$ the pseudo index of $M^4$, namely, the minimum of $K^{-1}_{\!M}C $ for all rational curve $C$ in $M$. We know that $i(M)\geq 3$ by Theorem 4.2. Since $M^4$ cannot be the blowing up of a point by Corollary 1.7, so Theorem 1.1 of \cite{AO} implies that there is no non-nef extremal ray. In other wards, any extremal ray $R$ of $M$ must be nef, meaning that the associated contraction map $\phi_R$ is of fiber type. The target space cannot be of dimension one or three, by Corollary 1.7 or Corollary 4.3, respectively. So the target has to be of dimension two or zero.

By Part 6 of Theorem 4.1.3 in \cite{AM}, we know that either $M^4$ is a Fano fourfold with Picard number $\rho (M)=1$,  or $\phi_R$ is an equidimensional fibration over a normal surface with general fiber being a del Pezzo surface. In the latter case, by Theorem 1.3 of \cite{HN}, we know that $\phi_R$ is actually a projective bundle, and the target space is smooth. Now Theorem 1.5 kicks in and enables us to conclude that $M^4$ must be ${\mathbb P}^2\times {\mathbb P}^2$. This completes the proof of Theorem 1.9.

\vspace{0.5cm}

\section{Appnedix: K\"ahler manifolds with $Ric^\perp \equiv 0$}

We have seen that $Ric^\perp \equiv 0$ for any complex curve $M^1$, and it is natural to wonder what kind of K\"ahler manifolds in higher dimensions will have flat orthogonal Ricci curvature. On such a manifold $M^n$, where $n\geq 2$, we have
$$ |X|^2Ric_{X\overline{X}} = R_{X\overline{X}X\overline{X}} $$
for any type $(1,0)$ tangent vector $X$. By the symmetry properties of the curvature tensor, one can rewrite the above as
\begin{equation} R_{i\overline{j}k\overline{\ell }} = \frac{1}{4} \big(  R_{i\overline{j}} g_{k\overline{\ell }}  + R_{k\overline{\ell }} g_{i\overline{j}} + R_{i\overline{\ell }} g_{k\overline{j}} + R_{k\overline{j}} g_{i\overline{\ell }}  \big) , \label{eq:appendix}
\end{equation}
where $\{ e_1, \ldots , e_n\}$ is a local tangent frame of $M^n$ and $R_{i\overline{j}}$ are the components of the Ricci tensor. If we  choose a unitary frame $e$ such that $R_{i\overline{j}} = r_i \delta_{ij}$, then under this frame we have
$$ R_{i\overline{j}k\overline{\ell }} = \frac{1}{4} (r_i + r_k) \big(  \delta_{ij} \delta_{k\ell }+ \delta_{i\ell} \delta_{kj }  \big) .$$
By letting $i=j$ and $k=\ell$, we get
$$ R_{i\overline{i}i\overline{i}} = r_i,  \ \ \ R_{i\overline{i}k\overline{k}} = \frac{1}{4}(r_i+r_k) \ \ \mbox{if} \ i\neq k. $$
Fix $i$ and sum up over $k$, we get $r_i = r_i + \frac{1}{4}(n-2)r_i +S$, where $S$ is the scalar curvature. We have $S=0$ since $Ric^\perp =0$, so we get $(n-2)r_i=0$ for each $i$, namely, when $n>2$, the Ricci tensor, hence the curvature tensor, will be identically zero.

For $n=2$, since $S=0$, the equation (\ref{eq:appendix}) says exactly that the Weyl curvature tensor vanishes, hence $M^2$ is conformally flat. By \cite{Tanno}, we know that either $M^2$ is flat, or it is locally holomorphically isometric to a product of complex curves $C_1\times C_2$, where $C_1$ has constant curvature $a>0$ and $C_2$ has constant curvature $-a$. To summarize, we have the following:

\begin{theorem}
Let $M^n$ be a K\"ahler manifold $M^n$ with $Ric^\perp \equiv 0$. If $n\geq 3$, then $M^n$ is flat. If $n=2$, then $M^2$ is conformally flat, which means locally it is either flat or the product of two complex curves, with constant curvature of opposite values.
\end{theorem}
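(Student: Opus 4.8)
The plan is to convert the pointwise scalar hypothesis into a full tensor identity by polarization, and then to read off the Ricci tensor by tracing. First I would record that $Ric^\perp\equiv 0$ is the same as the pointwise equality
$$ |X|^2\, Ric_{X\overline{X}} \ = \ R_{X\overline{X}X\overline{X}} \qquad \text{for every } X\in T^{1,0}M . $$
Both sides are Hermitian forms of bidegree $(2,2)$ in $(X,\overline{X})$, so their agreement for all $X$ forces the agreement of their complete polarizations. Carrying out this polarization and using the K\"ahler symmetries $R_{i\overline{j}k\overline{\ell}}=R_{k\overline{j}i\overline{\ell}}=R_{i\overline{\ell}k\overline{j}}$ produces the component identity
$$ R_{i\overline{j}k\overline{\ell}} \ = \ \tfrac14\big( R_{i\overline{j}}\,g_{k\overline{\ell}} + R_{k\overline{\ell}}\,g_{i\overline{j}} + R_{i\overline{\ell}}\,g_{k\overline{j}} + R_{k\overline{j}}\,g_{i\overline{\ell}} \big) $$
in any local frame, where $R_{i\overline{j}}$ are the Ricci components. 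This polarization is where the K\"ahler condition is genuinely used, and verifying that the symmetrized right-hand side recovers all the index symmetries is the one piece I would write out carefully.

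Next I would extract two consequences by tracing the identity. Contracting both index pairs with the metric turns the left side into the scalar curvature $S$ and the right side into $\tfrac{n+1}{2}S$, so $(n-1)S=0$ and hence $S=0$ for all $n\geq 2$. Then I would diagonalize the Ricci form at a point by choosing a unitary frame with $R_{i\overline{j}}=r_i\,\delta_{ij}$, under which the tensor identity collapses to
$$ R_{i\overline{i}i\overline{i}}=r_i, \qquad R_{i\overline{i}k\overline{k}}=\tfrac14(r_i+r_k)\ \ (i\neq k). $$
Summing the bisectional curvatures $R_{i\overline{i}k\overline{k}}$ over $k$ for fixed $i$ recovers the Ricci eigenvalue $r_i$ on the left, while the right side equals $r_i+\tfrac14\big((n-2)r_i+S\big)$; since $S=0$ this reduces to the clean relation $(n-2)\,r_i=0$.

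For $n\geq 3$ this forces $r_i=0$ for every $i$, so the Ricci tensor vanishes identically, and feeding $R_{i\overline{j}}\equiv 0$ back into the tensor identity makes every curvature component vanish: $M^n$ is flat. For $n=2$ the relation $(n-2)r_i=0$ is vacuous, and here the argument must change character: with $S=0$, the displayed bidegree $(2,2)$ identity says exactly that the Weyl curvature vanishes in real dimension four, so $M^2$ is conformally flat. At that point I would invoke Tanno's classification of conformally flat K\"ahler surfaces in \cite{Tanno} to conclude that $M^2$ is either flat or locally holomorphically isometric to a product $C_1\times C_2$ of complex curves of constant curvatures $a>0$ and $-a$. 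The main obstacle I anticipate is precisely this $n=2$ endgame: identifying the tensor identity with $S=0$ as Weyl-flatness and then importing the external classification, rather than any further computation, since for $n\geq 3$ the trace argument finishes the problem essentially by itself.
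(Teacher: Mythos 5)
Your proposal is correct and follows essentially the same route as the paper's appendix: the same polarization of $|X|^2 Ric_{X\overline{X}} = R_{X\overline{X}X\overline{X}}$ into the tensor identity, the same diagonalization of Ricci leading to $(n-2)r_i=0$, and the same identification of the $n=2$, $S=0$ case with Weyl-flatness followed by Tanno's classification. The only (cosmetic) difference is that you obtain $S=0$ cleanly from the full double trace giving $(n-1)S=0$, whereas the paper extracts it from the single-trace relation $r_i = r_i + \tfrac14(n-2)r_i + \tfrac14 S$ summed over $i$ --- the same computation in slightly different order.
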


As an immediate consequence, one can state the following:

\begin{corollary}
Let $M^n$ be a compact K\"ahler manifold $M^n$ with $Ric^\perp \equiv 0$ and $n\geq 2$. Then either it is a finite under cover of a flat complex torus, or $n=2$ and $M^2={\mathbb P}(E)$ where $E$ is a unitary flat holomorphic vector bundle of rank two over a compact complex curve $\Sigma_g$ of genus $g\geq 2$.
\end{corollary}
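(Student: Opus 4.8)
The plan is to feed Theorem 6.1 into standard structure theory, splitting into two cases. When $M^n$ is flat (this covers $n\ge 3$ as well as the locally flat branch of the $n=2$ case), $M^n$ is a compact flat K\"ahler manifold, so its universal cover is $\mathbb{C}^n$ with $\pi_1(M)$ acting as a Bieberbach group of Euclidean isometries; by the first Bieberbach theorem a finite-index subgroup consists of translations, so $M^n$ is finitely covered by the flat complex torus $\mathbb{C}^n/\Lambda$. This yields the first alternative and is essentially immediate.

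The substantive case is the non-flat $n=2$ branch, where by Theorem 6.1 the manifold $M^2$ is locally holomorphically isometric to a K\"ahler product $C_1\times C_2$ with $C_1$ of constant curvature $a>0$ (hence locally $\mathbb{P}^1$) and $C_2$ of constant curvature $-a$ (locally the disc $\mathbb{D}$). First I would note that this local product structure is parallel, so it determines a $J$-invariant, holonomy-invariant splitting $T^{1,0}M=T^{+}\oplus T^{-}$. Applying the de Rham decomposition theorem to the complete, simply connected universal cover $\widetilde M$, together with the constant-curvature identification of the two factors, produces a holomorphic isometry $\widetilde M\cong \mathbb{P}^1\times\mathbb{D}$. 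The deck transformations act as holomorphic isometries of this product, and since the factors have distinct curvatures no isometry interchanges them; hence $\pi_1(M)$ maps into $PSU(2)\times PSU(1,1)$, and I write $\gamma\mapsto(\rho_1(\gamma),\rho_2(\gamma))$.

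The heart of the argument is to extract the base curve. The key elementary fact is that every element of $PSU(2)=SO(3)$ fixes a point of $\mathbb{P}^1$, and every elliptic element of $PSU(1,1)$ fixes a point of $\mathbb{D}$. Since $\pi_1(M)$ acts freely on the product, a nontrivial $\gamma$ with $\rho_2(\gamma)$ elliptic or trivial would fix a point $(w_0,z_0)$ (taking $w_0$ a fixed point of $\rho_1(\gamma)$), a contradiction; this forces $\rho_2$ to be injective and its image $\Gamma$ to be torsion-free with no elliptic elements. Compactness of the $\mathbb{P}^1$-factor upgrades proper discontinuity on the product to proper discontinuity of $\Gamma$ on $\mathbb{D}$ (since $\gamma(\mathbb{P}^1\times K)=\mathbb{P}^1\times\rho_2(\gamma)K$). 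Thus $\Sigma:=\mathbb{D}/\Gamma$ is a Riemann surface with a complete constant-negative-curvature metric, the first-factor projection descends to a holomorphic $\mathbb{P}^1$-bundle $M\to\Sigma$, and as the continuous image of the compact $M$ the base is compact, so $\Sigma=\Sigma_g$ with $g\ge 2$ by Gauss--Bonnet. The transition functions come from $\rho_1$, hence are locally constant and valued in $PSU(2)$, so the bundle is flat with structure group $PSU(2)$.

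Finally I would realize $M$ as $\mathbb{P}(E)$. Since $\dim_{\mathbb{C}}\Sigma=1$ we have $h^2(\Sigma,\mathcal{O}^{\ast})=0$, so Lemma 4.2 applies and $M=\mathbb{P}(E)$ for some rank-two holomorphic bundle $E$ over $\Sigma_g$. \emph{The main obstacle is the last refinement}: promoting the flat $PSU(2)$-structure to a genuine \emph{unitary flat} structure on $E$, i.e. lifting $\rho_1:\pi_1(\Sigma_g)\to PSU(2)$ through $U(2)\to PU(2)$. The obstruction is a single $\mathbb{Z}/2$ class (the parity of $\deg E$, equivalently a second Stiefel--Whitney class), and I expect this to be the delicate point, since odd-degree projectively flat bundles do occur on curves; one absorbs it by tensoring $E$ with a suitable line bundle and passing to the appropriate $SU(2)$-lift (or central extension), after which $E$ carries a flat unitary connection of degree zero. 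This establishes the second alternative and completes the proof of the corollary.
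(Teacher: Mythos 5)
Your argument is correct, and essentially the intended one, through the construction of the flat bundle structure: the paper offers no written proof (it calls the corollary an immediate consequence of Theorem 6.1), and your Bieberbach step, the de Rham splitting of the universal cover into $\mathbb{P}^1\times\mathbb{D}$, the fixed-point argument forcing $\rho_2$ to be injective with torsion-free image acting properly discontinuously on $\mathbb{D}$, and the reduction to $M=\mathbb{P}(E)$ via Lemma 4.2 are all sound.

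The genuine gap is the final ``absorption'' claim, and it cannot be repaired as stated. Tensoring a rank-two bundle by a line bundle changes its degree by an even number, so the parity of $\deg E$ --- which is precisely the $\mathbb{Z}/2$ obstruction you identified, i.e.\ the obstruction to lifting $\rho_1$ through $U(2)\to PU(2)$ --- is an invariant of $M$ (the ruling of $M$ over $\Sigma_g$, $g\geq 2$, is unique, so any $E$ with $\mathbb{P}(E)\cong M$ is determined up to such a twist). If the obstruction is nontrivial, no unitary flat $E$ exists, since unitary flat forces $\deg E=0$. And the obstruction is genuinely nontrivial for some of the manifolds under consideration: taking $A=\mathrm{diag}(i,-i)$ and $B=\left(\begin{smallmatrix}0&1\\ -1&0\end{smallmatrix}\right)$ in $SU(2)$, one has $[A,B]=-I$, so sending $a_1,b_1$ to the images of $A,B$ in $PU(2)$ and all remaining generators of $\pi_1(\Sigma_g)$ to the identity defines a representation $\rho_1$ whose lifting obstruction is $-1$; the corresponding diagonal quotient $(\mathbb{P}^1\times\mathbb{D})/\pi_1(\Sigma_g)$ is a compact K\"ahler surface with $Ric^\perp\equiv 0$ which is \emph{not} $\mathbb{P}(E)$ for any unitary flat $E$. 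What your argument actually proves --- and what is true --- is that in the non-flat case $M$ is a flat holomorphic $\mathbb{P}^1$-bundle over $\Sigma_g$, equivalently $M=\mathbb{P}(E)$ with $E$ \emph{projectively} unitary flat; one obtains ``unitary flat'' exactly when the obstruction (equivalently $w_2$, equivalently $\deg E\bmod 2$) vanishes. So the defect lies as much in the literal wording of the paper's statement as in your last step, but within your proposal the absorption step is an error: no twist or lift can change the parity.
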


There is also an alternative way to prove the above theorem, in which we view $Ric^\perp$ as the  {\it holomorphic sectional curvature} of an algebraic curvature operator risen from the one acting on the two-forms via the Bochner formula. Recall the notations from the appendix of \cite{NiTam} and define an algebraic (K\"ahler) curvature operator
$$
R_{Ric}=Ric \,\bar{\wedge} \operatorname{id},
$$
where for any $A, B:T'_xM \to T'_xM$ Hermitian symmetric  ($\bar{A}(X)=A(\bar{X})=0$),
\begin{eqnarray*}
 \langle A\bar{\wedge}B(X\wedge\bar{Y}), \overline{Z\wedge\bar{W}}\rangle &\doteqdot& \frac{1}{2} \left(	\langle \left(A\wedge \bar{B}+B\wedge\bar{A}\right)(X\wedge\bar{Y}), \overline{Z\wedge\bar{W}}\rangle\right.\\
 &\quad&\quad  \left.+\langle \left(A\wedge \bar{B}+B\wedge\bar{A}\right)(W\wedge\bar{Y}), \overline{Z\wedge\bar{X}}\rangle\right).
 \end{eqnarray*}
It is easy to check that $Ric^\perp (X, \overline{X})=H_{R_{Ric}-R}(X)/|X|^2$. Here $H_{\widetilde{R}}(X)$ is the holomorphic sectional curvature of $\widetilde{R}=R_{Ric}-R$.
From this it is easy to see that $Ric^\perp\equiv 0$ implies that $\widetilde{R}\equiv 0$. Hence $Ric^\perp \equiv 0$, via the decomposition of the curvature operators, induces that either $n=1$, or $n=2$ and $R$ is conformally flat, or $n\ge 3$ and $R$ is flat.

\vspace{0.5cm}

\begin{center}
\textbf{Acknowledgements}
\end{center}
We would like to thank Jun Li for pointing out and give outline of proof of the fact that any uniformly trivial vector bundle on projective space is trivial, which is crucial to the proof of Theorem 1.5,  and to thank Hsian-Hua Tseng for informing us about the reference \cite{DH}, which gives a much shorter proof of Theorem 1.8.

\vspace{0.5cm}


\begin{thebibliography}{A}

\bibitem{AHZ} A. Alvarez, G. Heier, and F. Zheng, \textit{On projectivized vector bundles and positive holomorphic sectional curvature.} Proc. Amer. Math. Soc., \textbf{146} (2018), 2877--2882.



\bibitem{AM} M. Andreatta and M. Mella,  \textit{Morphisms of projective varieties from the viewpoint of minimal model theory.}   Dissertationes Math. (Rozprawy Mat.) \textbf{413} (2003), 72 pp.

\bibitem{AO} M. Andreatta and G. Occhetta,  \textit{Special rays in the Moris cone of a projective variety.}  Nagoya Math. J., \textbf{168} (2002), 127--137.





\bibitem{BishopGoldberg} R.L. Bishop and S.I. Goldberg,  \textit{On the second cohomology group of a K\"ahler manifold of positive curvature.} Proc. Amer. Math. Soc., \textbf{16} (1965), 119--122.

		\bibitem{ChauTam} A. Chau and L.-F.  Tam, \textit{ K\"ahler C-spaces and quadratic bisectional curvature.} J. Diff. Geom., \textbf{94} (2013), no. 3, 409--468.




\bibitem{DH} T. Dedieu and A. H\"oring, \textit{Numerical characterization of qradrics.}  Algebraic Geometry \textbf{4} (2017), 120--135.

\bibitem{Frankel} T. Frankel, \textit{Manifolds with positive curvature.}  Pacific J. Math., \textbf{11} (1961), 165--174.

\bibitem{Fujita} T. Fujita, \textit{Classification theories of polarized varieties.}  London Math. Soc. Lecture Notes Series \textbf{155} Cambridge University Press (1990).


\bibitem{GuZhang}
H. Gu and Z. Zhang, \textit{An extension of Mok's theorem on the generalized Frankel
conjecture.} Sci. China Math., \textbf{53} (2010), 1--12.


		\bibitem{HeierWong}
G. Heier and B. Wong, \textit{On projective K\"ahler manifolds of partially positive curvature and rational connectedness.} arXiv:1509.02149.



\bibitem{Hitchin}
N. Hitchin, {\em On the curvature of rational surfaces.} In Differential Geometry ({\em Proc. Sympos. Pure Math., Vol
XXVII, Part 2, Stanford University, Stanford, Calif., 1973}), pages 65-80. Amer. Math. Soc., Providence, RI, 1975.




\bibitem{HN} A. H\"oring and C. Novelli, \textit{Mori contractions of maximal length.}   Publ. RIMS Kyoto Univ., \textbf{49} (2013),  215--228.


\bibitem{Iskov} V.A. Iskovskikh, \textit{Fano 3-folds I, II.}  Izv. Akad. Nauk SSSR Ser. Mat.,  \textbf{41} (1977), 516--562 and \textbf{42} (1978), 469--506.

\bibitem{IP} V.A. Iskovskikh and Yu Prokhorov, Fano varieties, \textit{Algebraic geometry, V.} 1--247, Encyclopaedia Math. Sci. \textbf{47}, Springer-Verlag, Berlin, 1999.

\bibitem{Itoh} M. Itoh, \textit{On curvature properties of K\"ahler C-spaces.}  J.  Math. Soc. Japan,  \textbf{30} (1978), no. 1, 39--71.





\bibitem{Kollar} J. Koll\'ar, \textit{Extremal rays on smooth threefolds,}  Ann. scient. \'Ec. Norm. Sup.,  \textbf{24} (1991), 339--361.


\bibitem{LiWuZheng} Q. Li, D. Wu, and F.-Y. Zheng, \textit{An example of compact K\"ahler manifold with nonnegative quadratic bisectional curvature.} Proc. Amer. Math. Soc., {\bf 141} (2013), no.6, 2117--2126.



\bibitem{Mok} N. Mok, \textit{ The uniformization theorem for compact K\"ahler manifolds of nonnegative holomorphic bisectional curvature.} J. Diff. Geom.,  \textbf{27}(1988),  no. 2, 179--214.


\bibitem{Mori} S. Mori,  \textit{Threefolds whose canonical bundles are not numerically effective.} Ann. Math.,
     \textbf{116} (1982), 133--176.

    \bibitem{MoriMukai} S. Mori and S. Mukai,  \textit{Classification of Fano 3-folds with $B_2\geq 2$.} Manu. Math.,
     \textbf{36} (1981), 147--162.

\bibitem{NiTam} L. Ni and L.F Tam, \textit{ Poincar\'e-Lelong equation via the Hodge-Laplace heat equation.}
Compositio Math., \textbf{149} (2013), 1856--1873.

\bibitem{Ni-Wolfson} L. Ni and J.  Wolfson, \textit{
The Lefschetz theorem for CR submanifolds and the nonexistence of real analytic Levi flat submanifolds.}
Comm. Anal. Geom., \textbf{11} (2003), no. 3, 553--564.

\bibitem{NZ} L. Ni and F.-Y Zheng, \textit{ Comparison and vanishing  theorems for K\"ahler manifolds.} arXiv:1802.08732.


\bibitem{Okonek} C. Okonek, M. Schneider, and H. Spindler, \textit{Vector bundles on complex projective spaces.} Progress in Mathematics, vol.\,3 Birh\"auser, Boston, 1980.

\bibitem{Schoen-Wolfson} R. Schoen and J.  Wolfson, \textit{ Theorems of Barth-Lefschetz type and Morse theory on the space of paths.} Math. Z., \textbf{229} (1998), no. 1, 77--89.

\bibitem{Tanno} S. Tanno, \textit{4-dimensional conformally flat K\"ahler manifolds.}
Tohoku  Math. J., \textbf{24} (1972), 501--504.


\bibitem{Whitney} H. Whitney, \textit{Tangents to an analytic variety.} Annals Math., \textbf{81} (1965), no. 3, 496--549.



\bibitem{WuYauZheng} D. Wu, S-T Yau, and F.-Y. Zheng, \textit{A degenerate Monge-Amp\`ere equation and the boundary classes of K\"ahler cones.}  Math. Res. Lett., {\bf 16} (2009), no.2, 365--374.


\bibitem{Yau74} S.-T. Yau, \textit{ On the curvature of compact Hermitian manifolds.  } Invent. Math., {\bf 25} (1974), 213--239.
		




\end{thebibliography}
\end{document}